\newcommand{\NN}{\mathbb{N}}
\newcommand{\RR}{\mathbb{R}}
\newcommand{\ba}{\boldsymbol{a}}
\newcommand{\be}{\boldsymbol{e}}
\newcommand{\bn}{\boldsymbol{n}}
\newcommand{\br}{\boldsymbol{r}}
\newcommand{\bu}{\boldsymbol{u}}
\newcommand{\bv}{\boldsymbol{v}}
\newcommand{\bw}{\boldsymbol{w}}
\newcommand{\bx}{\boldsymbol{x}}
\newcommand{\bW}{\boldsymbol{W}}
\newcommand{\blambda}{\boldsymbol{\lambda}}
\newcommand{\bff}{\boldsymbol{f}}
\newcommand{\bsigma}{\boldsymbol{\sigma}}
\newcommand{\bH}{\boldsymbol{H}}
\newcommand{\bT}{\boldsymbol{\mathrm{T}}}
\newcommand{\bI}{\boldsymbol{\mathrm{I}}}
\newcommand{\bD}{\boldsymbol{\mathrm{D}}}
\newcommand{\bE}{\mathrm{E}}
\newcommand{\bF}{\boldsymbol{\mathrm{F}}}
\newcommand{\dd}{\mathrm{d}}
\newcommand{\bbP}{\mathbb{P}}
\newcommand{\bbQ}{\mathbb{Q}}
\newcommand{\bbI}{\mathbb{I}}
\newcommand{\opnorm}{\@ifstar\@opnorms\@opnorm}
\newcommand{\@opnorms}[1]{%
  \left|\mkern-1.5mu\left|\mkern-1.5mu\left|
   #1
  \right|\mkern-1.5mu\right|\mkern-1.5mu\right|
}
\newcommand{\@opnorm}[2][]{%
  \mathopen{#1|\mkern-1.5mu#1|\mkern-1.5mu#1|}
  #2
  \mathclose{#1|\mkern-1.5mu#1|\mkern-1.5mu#1|}
}
\newcommand{\nrm}[1]{\left\lVert#1\right\rVert}
\newcommand{\Vo}{\mathring{V}}
\newcommand{\Ko}{\mathring{K}}
\newcommand{\Cc}{\mathcal{C}}
\newcommand{\bCc}{\boldsymbol{\mathcal{C}}}
\newcommand{\Ec}{\mathcal{E}}
\newcommand{\Jc}{\mathcal{J}}
\newcommand{\bPc}{\boldsymbol{\mathcal{P}}}
\newcommand{\Pc}{\mathcal{P}}
\newcommand{\Tc}{\mathcal{T}}
\DeclareMathOperator*{\Ran}{Ran}
\DeclareMathOperator*{\Ker}{Ker}
\DeclareMathOperator*{\Span}{span}
\newtheorem{remark}[theorem]{Remark}
\begin{document}

\title{On the finite element approximation of a semicoercive Stokes variational inequality arising in glaciology}

\author{Gonzalo G.~de Diego\thanks{Mathematical Institute, University of Oxford, Oxford, OX2 6GG, UK (gonzalezdedi@maths.ox.ac.uk). PEF was supported by EPSRC grants EP/V001493/1 and EP/R029423/1.} \and Patrick E.~Farrell\footnotemark[1] \and Ian J.~Hewitt\footnotemark[1]}

\maketitle

\begin{abstract}
	Stokes variational inequalities arise in the formulation of glaciological problems involving contact. We consider the problem of a two-dimensional marine ice sheet with a grounding line, although the analysis presented here is extendable to other contact problems in glaciology, such as that of subglacial cavitation. The analysis of this problem and its discretisation is complicated by the nonlinear rheology commonly used for modelling ice, the enforcement of a friction boundary condition given by a power law, and the presence of rigid modes in the velocity space, which render the variational inequality semicoercive. In this work, we consider a mixed formulation of this variational inequality involving a Lagrange multiplier and provide an analysis of its finite element approximation. Error estimates in the presence of rigid modes are obtained by means of a specially-built projection operator onto the subspace of rigid modes and a Korn-type inequality. These proofs rely on the fact that the subspace of rigid modes is at most one-dimensional. Numerical results are reported to validate the error estimates.
\end{abstract}

\begin{keywords}
  non-Newtonian Stokes, glaciology, variational inequality, semicoercive, convergence analysis
\end{keywords}

\begin{AMS}
  65N12, 65N15, 65N30, 86A40
\end{AMS}

\section{Introduction}\label{sec:intro}

We consider the problem of a marine ice sheet resting on a bedrock and sliding into the ocean, where it goes afloat. Such a configuration is found in Greenland and Antarctica, and the dynamics of the grounding line, the point where ice loses contact with the bedrock, is of crucial importance for predicting future sea level rise and comprehending large scale climate dynamics \cite{weertman1974,schoof2007b,ritz2015,garbe2020}. This contact problem is modelled by coupling a Stokes problem for the ice flow with a time-dependent advection equation for the free surface. At each instant in time, the Stokes equation must be solved with contact boundary conditions that allow the detachment of the ice from the bedrock. These contact conditions transform the instantaneous Stokes problem into a variational inequality. Similar contact conditions appear in a related problem of subglacial cavitation (see Remark \ref{remark:subglacial_cavitation} below), which is also of fundamental importance in glaciology \cite{fowler1986, schoof2005, zoet2016}, and the results presented in this work are extendable to such problems \cite{dediego2022}.

Numerous finite element simulations of these equations have been carried out \cite{durand2009, favier2012, stubblefield2021}. However, to the best of our knowledge, no formal analysis of these problems and their approximation exist in the mathematical literature. Moreover, we believe that the discretisations used in these computations can be improved upon, by exploiting the structure of the variational inequality. Although the Stokes variational inequality is superficially similar to the elastic contact problem, which has been widely studied \cite{kikuchi1988,haslinger1996}, the Stokes problem in the context of marine ice sheets with a grounding line includes three substantial difficulties that must be addressed carefully: the presence of rigid body modes in the space of admissible velocities, the nonlinear rheological law used to model ice as a viscous fluid, and the nonlinearity of the friction boundary condition.

In this work we analyse the instantaneous Stokes variational inequality and its approximation. The presence of rigid body modes renders this problem semicoercive. Although semicoercive variational inequalities have been studied in the past \cite{gwinner1991, spann1994, adly2000}, existing analyses use purely indirect arguments which give very limited information on how different meshes and finite elements affect the discretisation. Here, we present a novel constructive approach based on the use of a specially designed projection operator onto the subspace of rigid modes that satisfies a Korn type inequality. Error estimates are obtained for the rigid component of the velocity error by exploiting the fact that the dimension of the subspace of rigid modes is at most one. The nonlinear rheology and friction boundary condition complicate the estimation of errors for the discrete problem. Here, we use the techniques from \cite{belenki2012,hirn2013} to establish a convergence analysis. 

We propose a mixed formulation of the Stokes variational inequality where a Lagrange multiplier is used to enforce the contact conditions. This formulation permits a structure-preserving discretisation that explicitly enforces a discrete version of the contact conditions, up to rounding errors. This allows for a precise distinction between regions where the ice detaches from the bed and those where it remains attached. This precision is extremely useful when coupling the Stokes variational inequality with the time-dependent advection equation for the free surface. Numerical results with this scheme in the context of subglacial cavitation can be found in \cite{dediego2022}.

\subsection{Outline of the paper}

In Section \ref{sec:formulation}, the Stokes variational inequality and its mixed formulation are presented. We prove a Korn-type inequality involving a projection operator onto the subspace of rigid modes that will be used throughout the analysis, and we demonstrate that the mixed formulation is well posed. In Section \ref{sec:abstract-discretisation}, we analyse a family of finite element approximations of the mixed problem and present error estimates in terms of best approximation results for the velocity, pressure and Lagrange multiplier. Finally, in Section \ref{sec:fe-scheme}, a concrete finite element scheme involving quadratic elements for the velocity and piecewise constant elements for the pressure and the Lagrange multiplier is introduced. We then present error estimates for this scheme and we solve a problem with a manufactured solution to calculate convergence rates and compare these with our estimates.

\subsection{Notation}

Given two normed vector spaces $X$ and $Y$ and a bounded linear operator $T:X\to Y$, the dual of $X$ is denoted by $X'$ and the dual operator to $T$ by $T':Y'\to X'$. The range of $T$ is denoted by $\Ran{T}$ and its kernel by $\Ker{T}$. The norm in $X$ is denoted by $\nrm{\cdot}_X$ and the pairing between elements in the primal and dual spaces by $\langle f, x \rangle_X$ for $f\in X'$ and $x\in X$. We will work with the Lebesgue and Sobolev spaces $W^{m,r}(\Omega)$, where $m \geq 0$ and $r \geq 1$, defined as the set of functions with weak derivatives up to order $m$ which are $r$-integrable. When $m = 0$ we write $L^r(\Omega)$. The space of polynomials of degree $k$ over a simplex $E$ (interval, triangle, tetrahedron) is denoted by $\Pc_k(E)$. The space of continuous functions over a domain $\Omega$ is given by $\Cc(\Omega)$. Vector-valued functions and vector-valued function spaces will be denoted with bold symbols, e.g.~$\bu$ and $\boldsymbol{W}^{m,r}(\Omega)$. We write $f\sim g$, $f \lesssim g$ and $f \gtrsim g$ if there exist generic constants $c,C > 0$ such that $cf \leq g \leq Cf$, $c f \leq g$, and $cf \geq g$, respectively. Throughout this work, we assume that these generic constants do not depend on the mesh size or on the continuous and discrete solutions of the problem.

\section{Formulation of the problem}\label{sec:formulation}

In this section we introduce the semicoercive variational inequality that arises in the study of marine ice sheets and present its formulation as a mixed problem with a Lagrange multiplier. We then analyse the existence and uniqueness of solutions for the mixed problem.

\begin{figure}
	\centering
	\includegraphics[scale = 0.75]{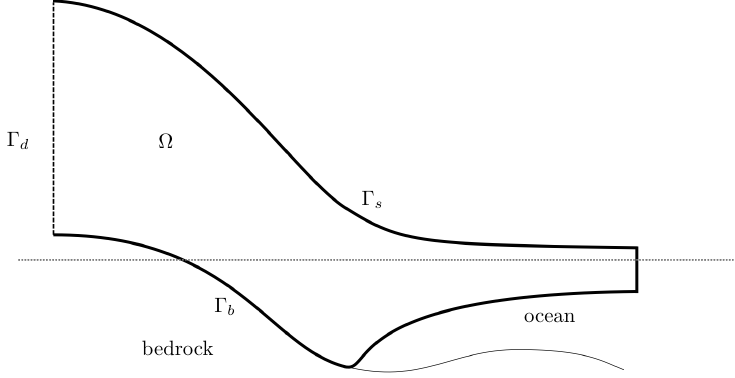}
	\caption{Geometry of the problem under consideration. The domain $\Omega$ represents a half of a symmetric marine ice sheet. The boundary of $\Omega$ is partitioned into $\Gamma_b$ (the ice-bedrock interface), $\Gamma_s$ (the ice-ocean and ice-atmosphere interface), and $\Gamma_d$ (the symmetry axis). The horizontal dotted line represents the sea level.}
	\label{fig:domain}
\end{figure}

\subsection{A model for ice flow}

We consider a two-dimensional symmetrical marine ice sheet resting on a bedrock and sliding into the ocean. This is the most common configuration considered when studying marine ice sheets \cite{schoof2007a, schoof2007b, durand2009} and is generally used as a benchmark test case \cite{pattyn2012}. We denote by $\Omega\subset\RR^2$ the domain which represents one half of the ice sheet and we assume it to be connected and polygonal. The latter assumption is made to simplify the analysis, but we expect the essential results presented here to extend to domains with smooth enough boundaries. Ice is generally modelled as a viscous incompressible flow whose motion is described by the Stokes equation \cite{fowler2011}:
\begin{subequations}\label{eq:main}
	\begin{align}
		- \nabla \cdot \left( 2 \eta(|\bD\bu|) \bD\bu \right) + \nabla p &= \bff && \text{in $\Omega$}, \label{eq:main-subeq1}\\
		\nabla \cdot \bu &= 0 && \text{in $\Omega$}.\label{eq:main-subeq2}
	\end{align}
\end{subequations}	
In the equations above, $\bu: \Omega \to \mathbb{R}^2$ represents the ice velocity, $p: \Omega \to \mathbb{R}$ the pressure and $\bff: \Omega \to \mathbb{R}^2$ is a prescribed body force, generally due to gravitational forces. The tensor $\bD\bu$ is the symmetric part of the velocity gradient, that is,
\[
	\bD\bu = \frac{1}{2}\left( \nabla\bu + \nabla\bu^\top \right).
\]
The coefficient $\eta(|\bD\bu|)$ is the effective viscosity of ice, which relates the stress and strain rates. A power law, usually called Glen's law \cite{glen1958}, is the most common choice of rheological law for ice:
\begin{align}\label{eq:glens_law}
	\eta(|\bD\bu|) = \frac{1}{2} \mathcal{A}^{-1/n} \left( \frac{1}{2} |\bD\bu|^2 \right)^\frac{1-n}{2n}.
\end{align}
Here, $|\cdot|$ represents the Frobenius norm of a matrix: for $B\in\RR^{m\times m}$ with components $B_{ij}$ we have $|B|^2 = \sum_{ij}B_{ij}$. The field $\mathcal{A}\in L^\infty(\Omega)$ is a prescribed function for which $\mathrm{ess\, inf} \mathcal{A} > 0$. The parameter $n$ is constant and is usually set to $n=3$; for $n=1$ we recover the standard linear Stokes flow. From now on, we simply write 
\begin{align}\label{eq:rheology}
	\eta(|\bD\bu|) = \frac{1}{2}\alpha |\bD\bu|^{r-2},
\end{align}
where $\alpha = (1/2)^{(r-2)/2}\mathcal{A}^{1-r}$ is in $L^\infty(\Omega)$ and satisfies $\alpha \geq \alpha_0$ a.e.~on $\Omega$ for some $\alpha_0 > 0$. Moreover, $r = 1 + 1/n$ is in $(1,2]$ for $n \geq 1$. This expression for $\eta$ reveals the $r$-Stokes nature of the problem when considered as a variational problem in the setting of Sobolev spaces.

\subsection{Boundary conditions}

For a given velocity and pressure field, we define the stress tensor $\sigma = \sigma(\bu,p)$ by 
\[
	\sigma = \alpha |\bD\bu|^{r-2}\bD\bu - pI,
\]
where $I:\Omega\to\RR^{2\times 2}$ is the identity tensor field. Let $\bn$ denote the unit outward-pointing normal vector to the boundary $\partial\Omega$ and $\bT = \bI - \bn \bn^\top$ the orthogonal projection onto the tangential component to the boundary. We define the normal and tangential stresses at the boundary as
\begin{align*}
	\sigma_{nn} = (\sigma\bn)\cdot \bn \quad \text{and} \quad \bsigma_{nt} = \bT\sigma\bn.
\end{align*}

The boundary $\partial\Omega$ is partitioned into three disjoint open sets $\Gamma_s$, $\Gamma_b$ and $\Gamma_d$ of positive measure, see Figure \ref{fig:domain}. The subset $\Gamma_s$ represents the part of the boundary in contact with the atmosphere and the ocean. Here we enforce
\begin{align}\label{eq:bc-gammas}
	\sigma_{nn} = p_s \quad \text{and} \quad \bsigma_{nt} = 0 \quad & \text{on $\Gamma_s$},
\end{align}
where $p_s:\Gamma_s\to\RR$ represents a prescribed surface traction force. On $\Gamma_b$ the ice is in contact with the bedrock. Here, we enforce the contact conditions which allow the ice to detach from but not penetrate the bedrock. In particular, detachment can occur if the normal stress equals the subglacial water pressure, which is defined everywhere along a thin lubrication layer in between the ice and the bedrock. We also assume that the ice slides along the bedrock according to a power law. Then, the boundary conditions on $\Gamma_b$ are given by 
\begin{subequations}\label{eq:bc-gammab}
	\begin{align}
		\bu\cdot\bn \leq 0,\quad \sigma_{nn} \leq -p_w \quad \text{and} \quad \left(\bu\cdot\bn\right)\left(\sigma_{nn} + p_w \right) = 0 & \quad \text{on $\Gamma_b$}, \label{eq:main-bc-contact} \\
		\bsigma_{nt} = - \tau | \bT\bu |^{r-2}\bT\bu & \quad \text{on $\Gamma_b$},\label{eq:main-bc-sliding}
	\end{align}
\end{subequations}
where $p_w : \Gamma_b \to \RR$ is the water pressure at the ice-bedrock interface and $\tau > 0$ a constant. A power-law boundary condition as in \eqref{eq:main-bc-sliding} was first proposed by Weertman \cite{weertman1957} and has since become a popular model for glacier sliding \cite{greve2009, fowler2011}.

Finally, $\Gamma_d$ represents the ice divide of the ice sheet, which is essentially its symmetry axis. As such, it is a vertical surface on which we enforce the symmetry conditions
\begin{align}\label{eq:bc-gammad}
	\bu\cdot\bn = 0 \quad \text{and} \quad \bsigma_{nt} = 0 \quad & \text{on $\Gamma_d$}.
\end{align}

\subsection{The mixed formulation}

We now present the mixed formulation whose analysis and approximation is the focus of this work. To do so, we first write \eqref{eq:main} with boundary conditions \eqref{eq:bc-gammas}-\eqref{eq:bc-gammad} as a variational inequality. Then, we introduce the mixed formulation by defining a Lagrange multiplier which enforces a constraint that arises due to the contact boundary conditions \eqref{eq:main-bc-contact}. In Appendix \ref{app:equivalence-forms} we specify and prove the sense in which these different formulations are equivalent.

In order to build a weak formulation of \eqref{eq:main} and \eqref{eq:bc-gammas}-\eqref{eq:bc-gammad}, we must first define suitable function spaces in which to seek the velocity and the pressure. For $r' = 1/(r-1)$, we write 
\begin{align*}
	V = \left\lbrace \bv \in \bW^{1,r}(\Omega) : \bv\cdot\bn = 0 \quad \text{on $\Gamma_d$} \right\rbrace , \quad Q = L^{r'}(\Omega).
\end{align*}
We denote by $\gamma_n : V \to L^r(\Gamma_b)$ the normal trace operator onto $\Gamma_b$. This operator is built by extending to $V$ the operator $\bv \mapsto \bv\cdot\bn$ on $\Gamma_b$, defined on smooth functions. The closed convex subset $K$ of $V$ is then defined by
\begin{align*}
	K = \left\lbrace \bv\in V : \gamma_n\bv \leq 0 \quad \text{a.e. on $\Gamma_b$} \right\rbrace.
\end{align*}
We also introduce the operators $A:V\to V'$, $G:V \to V'$ and $B:Q\to V'$, defined by
\begin{align}\label{eq:defn-operator-AB}
	\langle A\bu,\bv \rangle_V &= \int_\Omega \alpha |\bD\bu|^{r-2} \left(\bD\bu : \bD\bv\right)\,\dd x, \\
	\langle G\bu,\bv \rangle_V &= \int_{\Gamma_b} \tau |\bT\bu|^{r-2} \left(\bT\bu \cdot \bT\bv\right)\,\dd s, \\
	 \langle Bq, \bv \rangle &= \int_\Omega \left(\nabla\cdot\bv\right) q\,\dd x.
\end{align}
Moreover, the action of the applied body and surface forces on the domain $\Omega$ is expressed via the function $F\in V'$, defined as
\begin{align}\label{eq:defn-operator-F}
	\langle F, \bv \rangle_V = \int_\Omega \bff\cdot \bv\,\dd x + \int_{\Gamma_s}p_s\left( \bv\cdot\bn\right)\,\dd s - \int_{\Gamma_b}p_w\left( \bv\cdot\bn\right)\,\dd s.
\end{align}
In order for \eqref{eq:defn-operator-F} to make sense, we require $\bff\in L^{r'}(\Omega)$, $p_s \in L^{r'}(\Gamma_s)$ and $p_w \in L^{r'}(\Gamma_b)$. Then, \eqref{eq:main} with boundary conditions \eqref{eq:bc-gammas}-\eqref{eq:bc-gammad} can be reformulated as the variational inequality: find $(\bu,p)\in K\times Q$ such that
\begin{align}\label{eq:vi-continuous}
	\langle A\bu + G\bu - Bp - F, \bv - \bu \rangle_V + \langle Bq, \bu \rangle_V \geq 0 \quad \forall (\bv,q)\in K\times Q.
\end{align}

In the mixed formulation, the constraint $\bv\cdot\bn \leq 0$ on $\Gamma_b$ is enforced via a Lagrange multiplier. We denote the range of $\gamma_n$ by $\Sigma$ and equip this space with the $W^{1-1/r,r}(\Gamma_b)$ norm. We assume the geometry of $\Omega$ and $\Gamma_b$ to be sufficiently regular for this space to be a Banach space, see \cite[Section 5]{kikuchi1988}, \cite[Chapter III]{haslinger1996} and \cite[Chapter 7]{adams2003} for discussions on normal traces and trace spaces. The Lagrange multiplier is sought in the convex cone of multipliers
\[
	\Lambda = \left\lbrace \mu\in \Sigma' : \langle \mu, \zeta \rangle_\Sigma \geq 0 \quad \text{$\forall \zeta\in\Sigma$ s.t. $\zeta \leq 0$} \right\rbrace.
\]
The equivalent mixed formulation of \eqref{eq:vi-continuous} is: find $(\bu,p,\lambda)\in V\times Q\times \Lambda$ such that
\begin{subequations}\label{eq:mixed-continuous}
\begin{align}
	\langle A\bu + G \bu - Bp - F, \bv \rangle_V - \langle \lambda, \gamma_n\bv\rangle_\Sigma &=0 && \forall\bv\in V, \label{eq:mixed-continuous-V}\\
	 \langle Bq,\bu \rangle_V &= 0 && \forall q\in Q,\\
	 \langle \mu - \lambda, \gamma_n\bu\rangle_\Sigma &\geq 0 && \forall \mu\in \Lambda. \label{eq:mixed-continuous-S}
\end{align}
\end{subequations}

The Lagrange multiplier $\lambda$ essentially coincides with $\sigma_{nn} + p_w$ on $\Gamma_b$. Indeed, if the solution to \eqref{eq:mixed-continuous} is sufficiently smooth for integration by parts to hold, we arrive at $\lambda = \sigma_{nn} + p_w$ on $\Gamma_b$. Moreover, the conditions $\lambda\in\Lambda$ and \eqref{eq:mixed-continuous-S} are equivalent to
\begin{align}\label{eq:weak-contact-conditions}
	\langle \mu, \gamma_n\bu\rangle_\Sigma \geq 0 \quad \forall \mu\in\Lambda, \quad \lambda\in\Lambda \quad \text{and} \quad \langle \lambda, \gamma_n\bu \rangle_\Sigma = 0,
\end{align}
which is a weak representation of the contact boundary conditions \eqref{eq:main-bc-contact}.

\subsection{Well posedness of the mixed formulation}

Questions on the existence and uniqueness of solutions of the mixed system \eqref{eq:mixed-continuous} can be answered by studying an equivalent minimisation problem. This equivalence depends on the so-called inf-sup property holding for the operators $B$ and $\gamma_n$. Let 
\begin{align}
	V_b = \left\lbrace \bv\in V : \gamma_n \bv = 0 \quad \text{a.e.~on $\Gamma_b$}  \right\rbrace.
\end{align}
These inf-sup conditions can be stated as
\begin{align}
	\sup_{\bv\in V_b} \frac{\langle Bq, \bv \rangle_V}{\nrm{\bv}_V} &\gtrsim \nrm{q}_Q \quad \forall q\in Q, \label{eq:inf-sup-VbQ} \\
	\sup_{\bv\in V} \frac{\langle \mu, \gamma_n\bv\rangle_\Sigma}{\nrm{\bv}_V} &\gtrsim \nrm{\mu}_{\Sigma'} \quad  \forall \mu\in \Sigma'. \label{eq:inf-sup-VS}
\end{align}
Condition \eqref{eq:inf-sup-VbQ} is proved in \cite[Lemma 3.2.7]{jouvet2010} and \eqref{eq:inf-sup-VS} follows from the inverse mapping theorem because $\gamma_n$ is surjective onto the Banach space $\Sigma$. We also define the space of divergence-free functions $\Vo$ and the convex set $\Ko$ as
\begin{align*}
	\Vo = \left\lbrace \bv\in V : \nabla \cdot \bv = 0 \right\rbrace \quad \text{and} \quad \Ko = \Vo \cap K.
\end{align*}
Then, \eqref{eq:mixed-continuous} is equivalent to the minimisation of the functional
\begin{align}\label{eq:Jc-functional}
	\Jc(\bv) = \frac{1}{r}\int_\Omega \alpha | \bD\bv |^r\,\dd x + \frac{1}{r}\int_{\Gamma_b} \tau | \bT\bv |^r\,\dd s - \langle F, \bv \rangle_V
\end{align}
over $\Ko$, see Appendix \ref{app:equivalence-forms}. The existence of minimisers of $\Jc$ over $\Ko$ hinges on whether the set
\begin{align*}
	R_V = \left\lbrace \br\in V : \int_\Omega | \bD\br |^r\,\dd x + \int_{\Gamma_b} | \bT\br |^r\,\dd s = 0 \right\rbrace
\end{align*}
is equal to or larger than the trivial set $\{0\}$. As shown in \cite[Lemma 6.1]{kikuchi1988}, the kernel of $\bD$ coincides with the set of rigid modes in $\Omega$, defined by
\begin{align*}
	R = \left\lbrace \br\in\bH^1(\Omega):\br(x,y) = \left(\begin{array}{c}
	a \\
	b
\end{array}	\right) + \omega \left(\begin{array}{r}
	-y \\
	x
\end{array}	\right), \quad (a,b,\omega)\in\RR^3\right\rbrace.
\end{align*}
Hence, $R_V$ is the set of rigid modes $\br\in R$ satisfying $\bT\br = 0$ on $\Gamma_b$ and $\br\cdot\bn = 0$ on $\Gamma_d$. For this reason, the dimension of $R_V$ can be at most 1 whenever $\Gamma_b$ is a flat surface perpendicular to $\Gamma_d$. In this case, $R_V$ is given by purely vertical translations. 

\begin{remark}\label{remark:subglacial_cavitation}
	Although a flat bedrock may appear to be unrealistic, these are considered in many theoretical studies of marine ice sheets \cite{schoof2007a,pattyn2012,stubblefield2021}. One-dimensional subspaces of rigid modes in $V$ also arise in marine ice sheets which can slide freely ($\tau = 0$) and in the subglacial cavity problem considered in \cite{gagliardini2007, dediego2022} (whenever a horizontal velocity boundary condition is imposed at the top boundary). An analysis of these two problems and their discretisation can be completed using the techniques and steps presented in this paper.
\end{remark}

We define the projection operator $\bbP:V\to R_V$ by
\begin{align*}
	\bbP(\bv) = \begin{cases}
		\dfrac{\int_{\Gamma_b} \bv\cdot\bn\,\dd s}{\int_{\Gamma_b} \be_R\cdot\bn\,\dd s} \be_R & \text{if $\dim{R_V} = 1$,} \\
		0 & \text{if $\dim{R_V} = 0$,}
	\end{cases}
\end{align*}
where $\be_R\in R_V$ is a basis function that spans $R_V$ when $\dim{R_V} = 1$. We choose this projection operator because it satisfies $\bbP(K) \subset K$. The operator $\bbQ = \bbI - \bbP$ then maps elements in $V$ onto a closed subspace whose intersection with $R_V$ is $\{0\}$. As a result, we have the following variation of Korn's inequality:

\begin{lemma}
	The inequality
	\begin{align}\label{eq:korn-inequality-variation}
		\nrm{\bbQ\bv}_V \lesssim \nrm{\bD\bv}_{L^r(\Omega)} + \nrm{\bT\bv}_{L^r(\Gamma_b)}
	\end{align}
	holds uniformly for all $\bv\in V$.
\end{lemma}

\begin{proof}
	Following the proof of \cite[Lemma 3]{chen2013}, we first notice that \eqref{eq:korn-inequality-variation} follows from 
	\begin{align}\label{eq:proof-korn-1}
		\int_\Omega |\bv|^r\,\dd x \lesssim \int_\Omega |\bD\bv|^r\,\dd x + \int_{\Gamma_b} |\bT\bv|^r\,\dd s \quad \forall \bv \in \Ran{\bbQ}
	\end{align}
	due to the generalised Korn inequality \cite[Lemma 2]{chen2013}. Since $\Ran{\bbQ}\cap R_V = \{0\}$, the proof of \eqref{eq:korn-inequality-variation} is completed by assuming \eqref{eq:proof-korn-1} to be false and mimicking the steps in the proof of \cite[Lemma 3]{chen2013}.
\end{proof}

Whenever $R_V \neq \{0\}$, \eqref{eq:mixed-continuous} is semicoercive in the sense that the operator $A+G$ has a nontrivial kernel. In Theorem \ref{thm:continuous-well-posed} below, we show that a consequence of semicoercivity is that \eqref{eq:mixed-continuous} will have a solution only when the following compatibility condition holds:
\begin{align}\label{eq:compatibility-condition}
	\langle F, \br \rangle_V < 0 \quad \forall \br\in (R_V \cap K) \setminus \{ 0 \}.
\end{align}
Condition \eqref{eq:compatibility-condition} allows us to establish the well-posedness of \eqref{eq:mixed-continuous} and the error estimates, because the restriction of the map $\br \mapsto \langle F, \br \rangle_V$ to the boundary of the unit ball in $K\cap R_V$ is a continuous map defined over a compact set. Therefore, whenever \eqref{eq:compatibility-condition} holds, we have the inequality
\begin{align}\label{eq:inequality-F}
	\delta\nrm{\br}_V \leq - \langle F, \br \rangle_V \quad \forall \br\in R_V\cap K,
\end{align}
where 
\begin{align*}
	\delta = \min_{\substack{\br\in R_V\cap K, \\ \nrm{\br}_V = 1}}{- \langle F, \br \rangle_V}.
\end{align*}
Inequality \eqref{eq:inequality-F} is used to prove that the solutions to the continuous and discrete problems are bounded from above in Theorems \ref{thm:continuous-well-posed} and \ref{thm:discrete-well-posed} below, respectively. It is also used in the proof of Lemma \ref{lemma:error-estimates-rigid-error} to obtain error estimates for the rigid component of the velocity error.

The importance of the compatibility condition \eqref{eq:compatibility-condition} is well-known in the study of semicoercive variational inequalities, see \cite{hlavacek1978, spann1994, kikuchi1988} in the context of general variational inequalities and \cite{schoof2010,chen2013} in a glaciological setting. The compatibility condition has the geometrical interpretation that the applied force $F$ should have an obtuse angle with the directions of escape of the body given by $R_V \cap K$, which in this case correspond with vertical upward movements whenever $\Gamma_b$ is flat.

\begin{theorem}\label{thm:continuous-well-posed}
	If $R_V = \{0\}$, then a solution to \eqref{eq:mixed-continuous} exists and is unique. If $R_V \neq \{0\}$, then there is a unique solution to \eqref{eq:mixed-continuous} provided the compatibility condition \eqref{eq:compatibility-condition} holds. Conversely, if $R_V \neq \{0\}$ and a solution exists, we have that
	\begin{align}\label{eq:compatibility-condition-weak}
		\langle F, \br \rangle_V \leq 0 \quad \forall \br\in R_V \cap K.
	\end{align}		
	 Moreover, a solution $(\bu,p,\lambda)\in V\times Q\times \Lambda$ of \eqref{eq:mixed-continuous} is bounded from above, i.e.
	\begin{align}\label{eq:bound-solution}
		\nrm{\bu}_V + \nrm{p}_Q + \nrm{\lambda}_{\Sigma'} \lesssim 1,
	\end{align}
	if \eqref{eq:compatibility-condition} holds when $R_V \neq \{0\}$.
\end{theorem}

\begin{proof}
	If $R_V = \{0\}$, then $\bbQ = \bbI$ and we can establish the coercivity of $\Jc$ with \eqref{eq:korn-inequality-variation}. Existence then follows from \cite[Theorem 2, Section 8.2]{evans1998} due to the convexity of $\Jc$ \cite{chen2013}. We  may prove existence when $R_V \neq \{0\}$ and \eqref{eq:compatibility-condition} holds with the method used in \cite[1.II]{fichera1973}. If a minimising sequence $(\bu_n)$ in $\Ko$ contains a bounded subsequence, then we may extract a subsequence that converges weakly to a point $\bu$. By the weak lower semicontinuity of $\Jc$, we can then show that $\bu$ minimises $\Jc$. We must therefore show that a minimising sequence $(\bu_n)$ has a bounded subsequence. Assume it does not; we must then have that $\nrm{\bu_n}_V\to\infty$. From inequality \eqref{eq:korn-inequality-variation} we deduce that 
	\begin{align}\label{eq:proof-well-pos-1}
		\nrm{\bbQ\bu_n}^r_V \lesssim \Jc(\bu_n) + \langle F, \bu_n \rangle_V.
	\end{align}
	We define $\bw_n = \bu_n/\nrm{\bu_n}_V$ and deduce from \eqref{eq:proof-well-pos-1} that
	\begin{align*}
		\nrm{\bbQ\bw_n}^r_V \lesssim \frac{1}{\nrm{\bu_n}_V^r} \Jc(\bu_n) + \frac{\nrm{F}_{V^\ast}}{\nrm{\bu_n}^{r-1}_V}.
	\end{align*}	
	Hence, $\bbQ\bw_n\to 0$ as $n\to \infty$. Also, since $\nrm{\bw}_V = 1$, we have that $\bbP \bw_n$ is bounded in $R_V$, so there is a subsequence, which we also denote by $(\bw_n)$, that converges to a $\br\in R_V$. In fact, since $\bbQ\bw_n\to 0$, we have that $\bw_n \to \br$ and therefore $\br \in (\Ko \cap R_V)\setminus \{0\}$. We reach a contradiction when we write \eqref{eq:proof-well-pos-1} as
	\begin{align*}
		\nrm{\bu_n}_V^{r-1}\nrm{\bbQ\bw_n}^r_V \lesssim \frac{1}{\nrm{\bu_n}_V} \left(\Jc(\bu_n) - \Jc(0)\right) + \langle F, \bw_n \rangle_V.
	\end{align*}
	and observe that the $\limsup$ of the left-hand side is strictly positive, while the $\liminf$ of the right-hand side is strictly negative due to the compatibility condition \eqref{eq:compatibility-condition}.
	
	Regarding the uniqueness of solutions, if $\bu_1$ and $\bu_2$ are two minimisers for $\Jc$ in $\Ko$, it follows that 
	\begin{align*}
		\langle A\bu_1 - A\bu_2, \bu_1 - \bu_2 \rangle_V + \langle G\bu_1 - G\bu_2, \bu_1 - \bu_2 \rangle_V \leq 0.
	\end{align*}
	Since the operator $A + G$ is monotone, the inequality above must be an equality and using \eqref{eq:korn-inequality-variation} we deduce that $\nrm{\bbQ(\bu_1 - \bu_2)}_V = 0$. Therefore, if $\bu\in \Ko$ minimises $\Jc$, any other minimiser must be of the form $\bu + \br$ with $\br\in R_V$. If $R_V = \{0\}$, we see that any solution must be unique. On the other hand, if $R_V \neq \{0\}$, since $\Jc(\bu + \br) = \Jc(\bu) + F(\br)$, the function $\br\in R_V$ must also satisfy $F(\br) = 0$. Moreover, whenever \eqref{eq:compatibility-condition} holds, we have that $F(\br) = 0$ if and only if $\br = 0$ because $\dim{R_V}=1$. As a result, $\br = 0$ and the solution is unique.
	
	For the converse statement, assume $R_V \neq \{0\}$ and let $(\bu,p,\lambda)$ solve \eqref{eq:mixed-continuous}. Then, from \eqref{eq:mixed-continuous-V} we deduce that 
	\begin{align*}
		\langle F, \br \rangle_V = - \langle \lambda, \gamma_n\br \rangle_\Sigma \leq 0 \quad \forall \br \in K\cap R_V.
	\end{align*}
	
	To prove \eqref{eq:bound-solution}, we first note that \eqref{eq:mixed-continuous-V} and \eqref{eq:korn-inequality-variation} lead to 
	\begin{align}\label{eq:proof-well-pos-bound-Qu}
		\nrm{\bbQ\bu}_V^r \lesssim \langle A\bu + G\bu, \bu \rangle_V = \langle F, \bu \rangle_V.
	\end{align}
	If $R_V = \{ 0 \}$, we have that $\nrm{\bu}_V^{r-1} \lesssim \nrm{F}_{V^\ast}$. If $R_V \neq \{ 0 \}$ and \eqref{eq:compatibility-condition} holds, then $\langle F, \bbP \bu \rangle_V \leq 0$ and we find that $\nrm{\bbQ \bu}_V^{r-1} \lesssim \nrm{F}_{V^\ast}$. By using the inf-sup conditions and H\"older's inequality, we can establish the bounds
	\begin{align}\label{eq:proof-well-pos-bound-lambda-p}
		\nrm{\lambda}_{\Sigma'} + \nrm{p}_Q \lesssim \nrm{\bbQ\bu}_V^{r-1} + \nrm{F}_{V^\ast} \lesssim \nrm{F}_{V^\ast}.
	\end{align}
	We then use \eqref{eq:inequality-F} to show that
	\begin{align}\label{eq:proof-well-pos-bound-Pu}
		\nrm{\bbP\bu}_V\lesssim - \langle F, \bbP\bu\rangle_V = \langle \lambda, \gamma_n(\bbQ\bu) \rangle_\Sigma \lesssim \nrm{F}_{V^\ast}^{r'}.
	\end{align}
	We finally establish the bound \eqref{eq:bound-solution} by putting together \eqref{eq:proof-well-pos-bound-Qu}, \eqref{eq:proof-well-pos-bound-lambda-p}, and \eqref{eq:proof-well-pos-bound-Pu}, and noting that $\nrm{\bu}_V \leq \nrm{\bbP\bu}_V + \nrm{\bbQ\bu}_V$.
\end{proof}


\section{Abstract discretisation}\label{sec:abstract-discretisation}

In this section we propose an abstract discretisation of the mixed system \eqref{eq:mixed-continuous} built in terms of a collection of finite dimensional spaces satisfying certain key properties. We can then introduce a discrete system analogous to \eqref{eq:mixed-continuous} and investigate the conditions under which we have a unique solution. Then, we prove Lemmas \ref{lemma:error-estimates-rigid-error}, \ref{lemma:error-Dv}, and \ref{lemma:error-ps}, which establish upper bounds for the errors of the discrete solutions. 

\subsection{The discrete mixed formulation}

For each parameter $h > 0$, let $V_h\subset V$, $Q_h\subset Q$ and $\Sigma_h\subset L^2(\Gamma_b)$ be finite dimensional subspaces. We also assume that $R_V \subset V_h$ to avoid the need of introducing discrete compatibility conditions. We define the discrete convex sets
\[
	\Lambda_h = \left\lbrace \mu_h\in\Sigma_h : \mu_h \leq 0 \text{ on $\Gamma_b$} \right\rbrace,
\]
and
\[
	K_h = \left\lbrace \bv_h\in V_h : \langle \mu_h, \gamma_n \bv_h \rangle_\Sigma \geq 0 \quad \forall \mu_h\in\Lambda_h \right\rbrace.
\]
An immediate consequence of the definitions of $\Lambda_h$ and $K_h$ is that $\Lambda_h \subset \Lambda$ but $K_h \not\subset K$ unless $\gamma_n(V_h) \subset \Sigma_h$. By the assumption $R_V \subset V_h$ and the fact that $R_V$ is given by purely vertical translations whenever $R_V \neq \{ 0\}$, we have that $K \cap R_V = K_h \cap R_V$.

The discrete analogue of the variational inequality \eqref{eq:vi-continuous} is: find $(\bu_h,p_h)\in K_h\times Q_h$ such that
\begin{align}\label{eq:vi-discrete}
	\langle A\bu_h + G\bu_h - Bp_h - F, \bv_h - \bu_h \rangle_V + \langle Bq_h, \bu_h \rangle_V \geq  0 \quad \forall (\bv_h,q_h)\in K_h\times Q_h.
\end{align}
This discrete variational inequality can be written as a mixed problem by introducing a Lagrange multiplier. This results in the discrete mixed formulation that is the counterpart of \eqref{eq:mixed-continuous}: find $(\bu_h,p_h,\lambda_h)\in V_h\times Q_h\times \Lambda_h$ such that
\begin{subequations}\label{eq:mixed-discrete}
\begin{align}
	\langle A\bu_h + G\bu_h - Bp_h - F, \bv_h \rangle_V  - \langle \lambda_h, \gamma_n \bv_h\rangle_\Sigma &= 0 && \forall \bv_h \in V_h, \label{eq:mixed-discrete-V}\\
	 \langle Bq_h, \bu_h \rangle_V &= 0 && \forall q_h \in Q_h, \\
	\langle \mu_h - \lambda_h, \gamma_n \bu_h\rangle_\Sigma &\geq 0 && \forall \mu_h \in \Sigma_h. \label{eq:mixed-discrete-S}
\end{align}
\end{subequations}

An advantage of using a mixed formulation at the discrete level is that we explicitly enforce a discrete version of the contact conditions \eqref{eq:main-bc-contact}. Just as in \eqref{eq:weak-contact-conditions}, it is possible to show that the conditions $\lambda_h\in\Lambda_h$ and \eqref{eq:mixed-discrete-S} are equivalent to
\begin{align}\label{eq:discrete-contact-conditions}
	\langle \mu_h, \gamma_n\bu_h\rangle_\Sigma \geq 0 \quad \forall \mu_h\in\Lambda_h, \quad \lambda_h\in\Lambda_h \quad \text{and} \quad \langle \lambda_h, \gamma_n\bu_h \rangle_\Sigma = 0.
\end{align}

In order to state a minimisation problem equivalent to \eqref{eq:mixed-discrete}, we must introduce the subspace of $V_h$ of discretely divergence-free functions and the discrete convex set $\Ko_h$:
\[
	\Vo_h = \left\lbrace \bv_h \in V_h : b(\bv_h,q_h) = 0 \quad \forall q_h\in Q_h \right\rbrace \quad \text{and} \quad \Ko_h = \Vo_h \cap K_h.
\]
Then, the discrete mixed problem \eqref{eq:mixed-discrete} is equivalent to the minimisation over $\Ko_h$ of the functional $\Jc:V \to \RR$ defined in \eqref{eq:Jc-functional}, provided that two discrete inf-sup conditions hold. For $V_{b,h} = V_h \cap V_b$, these discrete conditions can be stated as 
\begin{align}
	\sup_{\bv_h\in V_{b,h}} \frac{\langle Bq_h, \bv_h\rangle_V}{\nrm{\bv_h}_V} & \gtrsim \nrm{q_h}_Q \quad \forall q_h\in Q_h, \label{eq:inf-sup-VQh}\\
	\sup_{\bv_h\in V_{h}} \frac{\langle \mu_h, \gamma_n\bv_h\rangle_\Sigma}{\nrm{\bv_h}_V} &\gtrsim \nrm{\mu_h}_\Sigma \quad \forall \mu_h\in \Sigma_h. \label{eq:inf-sup-VSh}
\end{align}
When the conditions \eqref{eq:inf-sup-VQh} and \eqref{eq:inf-sup-VSh} hold, then \eqref{eq:vi-discrete}, \eqref{eq:mixed-discrete} and the minimisation of $\Jc$ over $\Ko$ are equivalent problems. The proofs for such equivalences require the same arguments as the proofs presented in Appendix \ref{app:equivalence-forms}. If $\Jc$ admits a unique minimiser over $\Ko_h$, the discrete inf-sup conditions guarantee a unique solution for \eqref{eq:mixed-discrete} and set constraints on the choice of spaces $V_h$, $Q_h$ and $\Sigma_h$ used when approximating solutions of \eqref{eq:mixed-continuous}. As in the continuous case, the well-posedness of \eqref{eq:mixed-discrete} requires the compatibility condition \eqref{eq:compatibility-condition} to hold. The theorem below can be proved in the same way as Theorem \ref{thm:continuous-well-posed}.

\begin{theorem}\label{thm:discrete-well-posed}
	Assume that the discrete inf-sup conditions \eqref{eq:inf-sup-VQh} and \eqref{eq:inf-sup-VSh} hold. If $R_V = \{0\}$, then a solution to \eqref{eq:mixed-discrete} exists and is unique. If $R_V \neq \{0\}$, then there is a unique solution to \eqref{eq:mixed-discrete} if the compatibility condition \eqref{eq:compatibility-condition} holds. Conversely, if $R_V \neq \{0\}$ and a solution exists, \eqref{eq:compatibility-condition-weak} must hold. The solution of \eqref{eq:mixed-continuous} is bounded from above independently of $h$, provided \eqref{eq:compatibility-condition} holds when $R_V \neq \{0\}$.
\end{theorem}

\subsection{Upper bounds for the velocity error}

An important tool presented in \cite{belenki2012,hirn2013} for establishing error estimates for non-Newtonian flows is the use of the function $\bF$. Here, for ease of notation, we denote by $\bF$ an operator that acts on both $\RR^{2\times 2}$ and $\RR^2$ by
\begin{align}\label{eq:def-F}
	\bF(\mathrm{A}) = |\mathrm{A}|^{\frac{r-2}{2}} \mathrm{A} \quad \text{for $\mathrm{A}\in\RR^{2\times 2}$ or $\mathrm{A}\in\RR^2$}.
\end{align}
This operator is closely related to the operators $A$ and $G$. Let the operator $\bE:V\times V\to \RR$ be given by
\begin{align*}
	\bE(\bu,\bv) = \nrm{\bF(\bD\bu) - \bF(\bD\bv)}^2_{L^2(\Omega)} + \nrm{\bF(\bT\bu) - \bF(\bT\bv)}^2_{L^2(\Gamma_b)}.
\end{align*}
We then have that 
\begin{equation}\label{eq:F-ineq-1}
	\bE(\bu,\bv)\sim \langle A\bu - A\bv, \bu - \bv \rangle_V +  \langle G\bu - G\bv, \bu - \bv \rangle_V 
\end{equation}
for all $\bu,\bv\in V$. The following variation of Young's inequality, 
\begin{align}\label{eq:F-ineq-2}
	\langle A\bu - A\bv, \bu - \bw \rangle_V + \langle G\bu - G\bv, \bu - \bw \rangle_V \leq \varepsilon \bE(\bu,\bv) + c_\varepsilon \bE(\bu,\bw),
\end{align}
is valid for any $\bu,\bv,\bw \in V$ and $\varepsilon > 0$, with the constant $c_\varepsilon > 0$ depending on $\varepsilon$. Additionally, the inequalities
\begin{align}
	\nrm{\bD\bv - \bD\bw}^2_{L^r(\Omega)}  &\lesssim \nrm{\bF(\bD\bv) - \bF(\bD\bw)}^2_{L^2(\Omega)} \nrm{|\bD\bv| + |\bD\bw|}_{L^r(\Omega)}^{2-r}, \label{eq:F-ineq-3-1} \\
	\nrm{\bT\bv - \bT\bw}^2_{L^r(\Gamma_b)} &\lesssim \nrm{\bF(\bT\bv) - \bF(\bT\bw)}^2_{L^2(\Gamma_b)} \nrm{|\bT\bv| + |\bT\bw|}_{L^r(\Gamma_b)}^{2-r}, \label{eq:F-ineq-3-2}
\end{align}
hold for any $\bv,\bw\in \bW^{1,r}(\Omega)$. A proof for inequalities \eqref{eq:F-ineq-1} and \eqref{eq:F-ineq-3-1} can be found in \cite[Lemmas 2.3, 2.4]{hirn2013}, and \cite[Lemma 2.7]{belenki2012} for \eqref{eq:F-ineq-2}, for the case without friction. The presence of the operator $G$ requires a version of \cite[Lemmas 2.3, 2.4]{hirn2013} and \cite[Lemma 2.7]{belenki2012} stated in terms of vectors in $\RR^d$. Since these results are based on algebraic inequalities for matrices, the extension to vectors in $\RR^d$ can be proved by considering diagonal matrices.

By applying the triangle inequality, \eqref{eq:korn-inequality-variation} and \eqref{eq:F-ineq-3-1}-\eqref{eq:F-ineq-3-2}, the velocity error can be decomposed into two components as
\begin{align}\label{eq:error-decomposition}
	\nrm{\bu - \bu_h}_V \lesssim \nrm{\bbP(\bu - \bu_h)}_V + \bE(\bu,\bu_h).
\end{align}
For the first term on the right of \eqref{eq:error-decomposition}, which represents the rigid component of the error, we present the following result:

\begin{lemma}\label{lemma:error-estimates-rigid-error}
	Assume that $R_V \neq \{0\}$ and that the compatibility condition \eqref{eq:compatibility-condition} holds. Let $(\bu,p,\lambda)\in V\times Q\times \Lambda$ be the solution to \eqref{eq:mixed-continuous} and $(\bu_h,p_h,\lambda_h)\in V_h\times Q_h\times \Lambda_h$ to \eqref{eq:mixed-discrete}. Then,
	\begin{align}\label{eq:error-vel-rigid}
		\nrm{\bbP(\bu - \bu_h)}_V \lesssim \bE(\bu,\bu_h) + \nrm{\lambda -\mu_h}_{\Sigma'} \quad \forall\,\mu_h\in \Lambda_h.
	\end{align}
\end{lemma}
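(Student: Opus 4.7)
The plan is built around the rigid body mode $\bw_r := \Pk(\bu - \bu_h)$, which by definition lies in $K \cap R_V$ and, via the standing assumption $R_V \subset V_h$, also in $V_h$. The argument has three moves: (i) convert the $V$-norm on the left into a test against $F$ using the compatibility condition; (ii) convert that into a $\lambda$-pairing using that $\bw_r$ is a rigid mode; (iii) inject $\bv_h - \bu_h$ algebraically so that the desired RHS structure emerges.

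Concretely, since $\bw_r \in K \cap R_V$, the compatibility estimate \eqref{eq:inequality-F} gives
\[
\delta \nrm{\bw_r}_V \leq -\langle F, \bw_r\rangle_V.
\]
Next, because $\bw_r$ is a rigid mode we have $\bD\bw_r = 0$ and $\nabla\cdot \bw_r = 0$, so testing the continuous momentum equation \eqref{eq:mixed-continuous-V} with $\bw_r$ collapses the $A$- and $B$-terms and yields
\[
-\langle F, \bw_r\rangle_V = \langle \lambda, \gamma_n \bw_r\rangle_\Sigma.
\]

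To bring in $\bv_h$, I write $\bw_r = (\bu - \bu_h) - (I-\Pk)(\bu - \bu_h) = (\bu - \bv_h) + (\bv_h - \bu_h) - (I-\Pk)(\bu - \bu_h)$ and pair with $\lambda$ through $\gamma_n$, obtaining
\[
\langle \lambda, \gamma_n \bw_r\rangle_\Sigma = \langle \lambda, \gamma_n(\bu - \bv_h)\rangle_\Sigma + \langle \lambda, \gamma_n(\bv_h - \bu_h)\rangle_\Sigma - \langle \lambda, \gamma_n(I-\Pk)(\bu - \bu_h)\rangle_\Sigma.
\]
The middle term is exactly the one that should survive on the RHS. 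The first term is controlled by the continuity of $\gamma_n:V\to\Sigma$ together with the a priori bound $\nrm{\lambda}_{\Sigma'}\leq C\nrm{F}_{V'}$ from Theorem \ref{thm:continuous-well-posed}, giving a $C\nrm{\bu - \bv_h}_V$ contribution.

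The main obstacle is the last term. I would bound it by $C\nrm{(I-\Pk)(\bu - \bu_h)}_V$ and then use Theorem \ref{thm:korn-projection} to replace it by $C\nrm{\bD(\bu - \bu_h)}_{L^r(\Omega)}$; the remaining step is to absorb $\nrm{\bD(\bu - \bu_h)}_{L^r(\Omega)}$ into $C\nrm{\bu - \bv_h}_V$, which is the delicate part. I expect this is achieved by combining the triangle inequality $\nrm{\bD(\bu - \bu_h)}_{L^r}\leq \nrm{\bD(\bu - \bv_h)}_{L^r} + \nrm{\bD(\bv_h - \bu_h)}_{L^r}$ with the uniform bounds $\nrm{\bD\bu}_{L^r} + \nrm{\bD\bu_h}_{L^r}\leq C$ from Theorems \ref{thm:continuous-well-posed} and \ref{thm:discrete-well-posed} and the $\bF$-inequalities \eqref{eq:F-ineq-1}--\eqref{eq:F-ineq-3}, with the resulting $\nrm{F}_{V'}$-dependent constants absorbed into $C$.
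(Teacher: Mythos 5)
Your overall architecture is the same as the paper's. Testing \eqref{eq:mixed-continuous-V} with the rigid mode $\Pk(\bu-\bu_h)$ and then re-expanding through $\bv_h$ is algebraically identical to the paper's single test with $\bv=\bu-\bv_h-\Pk(\bu-\bu_h)$: since $\langle A\bu-Bp-F,\bu-\bv_h\rangle_V=\langle\lambda,\gamma_n(\bu-\bv_h)\rangle_\Sigma$ by \eqref{eq:mixed-continuous-V}, your three-term splitting and theirs coincide term by term. Steps (i) and (ii), the use of \eqref{eq:inequality-F}, and the treatment of the first two terms (H\"older plus the a priori bound on $\nrm{\lambda}_{\Sigma'}$ for the $\bu-\bv_h$ term; keeping $\langle\lambda,\gamma_n(\bv_h-\bu_h)\rangle_\Sigma$ as is) are all correct and match the paper.

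The gap is in the last step. After H\"older and Theorem \ref{thm:korn-projection} the term $-\langle\lambda,\gamma_n(I-\Pk)(\bu-\bu_h)\rangle_\Sigma$ is bounded by $C\nrm{\lambda}_{\Sigma'}\nrm{\bD(\bu-\bu_h)}_{L^r(\Omega)}$, which is exactly where the paper's own proof stops. Your proposed absorption of $\nrm{\bD(\bu-\bu_h)}_{L^r(\Omega)}$ into $C\nrm{\bu-\bv_h}_V$ does not work: the triangle inequality leaves you with $\nrm{\bD(\bv_h-\bu_h)}_{L^r(\Omega)}$, and the uniform bounds $\nrm{\bD\bu}_{L^r(\Omega)}+\nrm{\bD\bu_h}_{L^r(\Omega)}\leq C$ only convert this into $\nrm{\bu-\bv_h}_V+C$ with an additive $O(1)$ constant, which does not shrink with $h$ and would make the estimate useless for convergence; the $\bF$-inequalities \eqref{eq:F-ineq-1}--\eqref{eq:F-ineq-3} cannot repair this either, since they relate $\nrm{\bD(\bu-\bu_h)}_{L^r(\Omega)}$ to $\nrm{\bF(\bu)-\bF(\bu_h)}_{L^2(\Omega)}$, not to $\nrm{\bu-\bv_h}_V$. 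The correct way to handle this residual is not to force it into the best-approximation term at all but to recognise it as a contribution of size $C\nrm{\bF(\bu)-\bF(\bu_h)}_{L^2(\Omega)}$ (via \eqref{eq:F-ineq-3} and the a priori bounds), which is then controlled by Lemma \ref{lemma:error-Dv} and absorbed by a Young-type argument when the velocity estimates are combined in the proof of Theorem \ref{thm:convergence}. In short: right decomposition, right tools, but the ``delicate part'' you flag cannot be closed by the triangle-inequality-plus-uniform-bounds mechanism you propose, and you should instead carry the $\bF$-error term explicitly.
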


\begin{proof}
	Under the assumption that $R_V \neq \{0\}$, we either have that $\bbP(\bu - \bu_h)\in R_V\cap K$ or $- \bbP(\bu - \bu_h)\in R_V\cap K$. If $\bbP(\bu - \bu_h)\in R_V\cap K$, then inequality \eqref{eq:inequality-F} and the continuous mixed system \eqref{eq:mixed-continuous} allow us to write 
	\begin{equation*}
		\nrm{\bbP(\bu - \bu_h)}_V \lesssim - \langle F, \bbP(\bu - \bu_h)\rangle_V = \langle \lambda, \gamma_n(\bbP(\bu - \bu_h))\rangle_\Sigma,
	\end{equation*}
	where the equality follows from $\langle A\bu + G\bu - Bp, \bbP(\bu-\bu_h)\rangle_V = 0$. Then, by noting that $\bbP(\bu - \bu_h) = - \bbQ(\bu - \bu_h) + \bu - \bu_h$ and $\langle\lambda,\gamma_n\bu\rangle_\Sigma = 0$, using inequalities \eqref{eq:korn-inequality-variation} and \eqref{eq:F-ineq-3-1}-\eqref{eq:F-ineq-3-2}, and using the uniform in $h$ boundedness of solutions to \eqref{eq:mixed-discrete} (see Theorem \ref{thm:discrete-well-posed}), we arrive at 
	\begin{equation}\label{eq:proof-rigid-error-case1}
		\nrm{\bbP(\bu - \bu_h)}_V \lesssim \bE(\bu,\bu_h) - \langle \lambda, \gamma_n \bu_h \rangle_\Sigma.
	\end{equation}
	On the other hand, if $- \bbP(\bu - \bu_h)\in R_V\cap K$, then,  by appealing to \eqref{eq:compatibility-condition} and the discrete mixed system \eqref{eq:mixed-discrete}, 
	\begin{equation*}
		\nrm{\bbP(\bu - \bu_h)}_V \lesssim \langle F, \bbP(\bu - \bu_h)\rangle_V = - \langle \lambda_h, \gamma_n(\bbP(\bu - \bu_h))\rangle_\Sigma.
	\end{equation*}
	Following the same steps as before, we deduce that 
	\begin{equation}\label{eq:proof-rigid-error-case2}
		\nrm{\bbP(\bu - \bu_h)}_V \lesssim \bE(\bu,\bu_h) - \langle \lambda_h, \gamma_n \bu \rangle_\Sigma \leq \bE(\bu,\bu_h),
	\end{equation}
	where the final inequality follows from the fact that $\lambda_h \leq 0$ on $\Gamma_b$ by the definition of $\Lambda_h$. As a result of \eqref{eq:proof-rigid-error-case1} and \eqref{eq:proof-rigid-error-case2}, we have that 
	\begin{equation*}
		\nrm{\bbP(\bu - \bu_h)}_V \lesssim \bE(\bu,\bu_h) + \max{\left\lbrace 0, - \langle \lambda, \gamma_n \bu_h \rangle_\Sigma \right\rbrace }
	\end{equation*}
	in all cases. Finally, given a $\mu_h\in\Lambda_h$, we have 
	\begin{align*}
		- \langle \lambda, \gamma_n \bu_h \rangle_\Sigma \leq \langle \mu_h - \lambda, \gamma_n \bu_h\rangle \lesssim \nrm{\lambda - \mu_h}_{\Sigma'}
	\end{align*}
	because $\langle \mu_h, \gamma_n\bu_h\rangle_\Sigma \geq 0$.
\end{proof}

\begin{remark}
	As mentioned in the introduction, previous analyses of finite element approximations of semicoercive variational inequalities either only consider the error in a seminorm \cite{hlavacek1978} or use indirect arguments to prove the convergence of the approximate solution in the complete norm \cite{bon1988, gwinner1991, spann1994, adly2000,chadli2018}. In these cases, arguments by contradiction involving a sequence of triangulations are used. In Lemma \ref{lemma:error-estimates-rigid-error}, on the other hand, we provide a fully constructive proof for bounding the rigid component of the velocity error from above. This result is a key ingredient in obtaining the error estimates for the finite element scheme presented in the next section. The proof of Lemma \ref{lemma:error-estimates-rigid-error} relies on $\dim{R_V} \leq 1$, which holds for almost all Stokes variational inequalities considered in glaciology \cite{durand2009, pattyn2012, favier2012, stubblefield2021, dediego2022}. 
\end{remark}

The second term on the right of \eqref{eq:error-decomposition} can be bounded from above by using the properties of the operator $\bE$.
	
	\begin{lemma}\label{lemma:error-Dv}
		Let the triples $(\bu,p,\lambda)\in V\times Q\times \Lambda$ and $(\bu_h,p_h,\lambda_h)\in V_h\times Q_h\times \Lambda_h$ be solutions to \eqref{eq:mixed-continuous} and \eqref{eq:mixed-discrete}, respectively. Then
		\begin{equation}\label{eq:error-Dv}
				\bE(\bu,\bu_h) \lesssim \bE(\bu,\bv_h) + \nrm{p - q_h}^2_Q  + \langle \lambda - \lambda_h, \gamma_n(\bv_h - \bu_h) \rangle_\Sigma
		\end{equation}
		holds for all $(\bv_h,q_h)\in \Vo_h\times Q_h$.
	\end{lemma}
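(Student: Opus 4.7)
The plan is to start from the lower bound in \eqref{eq:F-ineq-1}, which gives $c\nrm{\bF(\bu) - \bF(\bu_h)}_{L^2(\Omega)}^2 \leq \langle A\bu - A\bu_h,\, \bu - \bu_h\rangle_V$, and to split the right-hand side by inserting an arbitrary $\bv_h\in\Vo_h$ via $\bu - \bu_h = (\bu - \bv_h) + (\bv_h - \bu_h)$. The contribution $\langle A\bu - A\bu_h,\, \bu - \bv_h\rangle_V$ will be controlled directly by the Young-type inequality \eqref{eq:F-ineq-2} applied with the triple $(\bu,\bu_h,\bv_h)$, yielding $\varepsilon\,\nrm{\bF(\bu) - \bF(\bu_h)}_{L^2(\Omega)}^2 + c_\varepsilon\,\nrm{\bF(\bu) - \bF(\bv_h)}_{L^2(\Omega)}^2$, where the first summand is to be absorbed on the left for small enough $\varepsilon$ and the second is already of the form required by \eqref{eq:error-Dv}.

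For the remaining piece $\langle A\bu - A\bu_h,\, \bv_h - \bu_h\rangle_V$, the idea is to exploit the momentum equations. Since $\bv_h - \bu_h \in V_h \subset V$, testing \eqref{eq:mixed-continuous-V} and \eqref{eq:mixed-discrete-V} with $\bv_h - \bu_h$ and subtracting produces the identity
\[
\langle A\bu - A\bu_h,\, \bv_h - \bu_h\rangle_V = \langle B(p - p_h),\, \bv_h - \bu_h\rangle_V + \langle \lambda - \lambda_h,\, \gamma_n(\bv_h - \bu_h)\rangle_\Sigma.
\]
The multiplier contribution coincides with the signed term on the right of \eqref{eq:error-Dv} and is simply kept. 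For the pressure contribution I would exploit that both $\bv_h$ and $\bu_h$ lie in $\Vo_h$, so that $\langle Bq_h,\, \bv_h - \bu_h\rangle_V = 0$ for every $q_h \in Q_h$; this permits replacing $p_h$ by an arbitrary $q_h \in Q_h$ and then applying H\"older's inequality to obtain $|\langle B(p - q_h),\, \bv_h - \bu_h\rangle_V| \leq C\,\nrm{p - q_h}_Q \nrm{\bD(\bv_h - \bu_h)}_{L^r(\Omega)}$.

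To convert back to the $\bF$-operator I would next apply \eqref{eq:F-ineq-3} with $(\bv,\bw) = (\bv_h,\bu_h)$, bounding the factor $\nrm{|\bD\bv_h| + |\bD\bu_h|}_{L^r(\Omega)}^{2-r}$ by a constant through the a priori estimate $\nrm{\bu_h}_V \leq C(\nrm{F}_{V'})$ of Theorem \ref{thm:discrete-well-posed}. Combining this with Young's inequality $ab \leq C_\varepsilon a^2 + \varepsilon b^2$ and a triangle inequality $\nrm{\bF(\bv_h) - \bF(\bu_h)}_{L^2(\Omega)}^2 \leq 2\nrm{\bF(\bv_h) - \bF(\bu)}_{L^2(\Omega)}^2 + 2\nrm{\bF(\bu) - \bF(\bu_h)}_{L^2(\Omega)}^2$ produces a $C_\varepsilon\nrm{p - q_h}_Q^2$ term, a term already on the right of \eqref{eq:error-Dv}, and another $\varepsilon\nrm{\bF(\bu) - \bF(\bu_h)}_{L^2(\Omega)}^2$ contribution that is absorbed on the left for $\varepsilon$ chosen small enough.

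The main obstacle I anticipate is the control of the factor $\nrm{|\bD\bv_h| + |\bD\bu_h|}_{L^r(\Omega)}^{2-r}$ from \eqref{eq:F-ineq-3}: the a priori bound covers $\bu_h$, but $\bv_h$ is formally arbitrary in $\Vo_h$. The resolution is that \eqref{eq:error-Dv} is only used with $\bv_h$ an approximation of $\bu$, whose norm is controlled in terms of $\nrm{\bu}_V$ and hence of $\nrm{F}_{V'}$; equivalently, one may restrict without loss of generality to $\bv_h$ satisfying $\nrm{\bD\bv_h}_{L^r(\Omega)} \leq 2\nrm{\bD\bu}_{L^r(\Omega)}$, since otherwise the best-approximation term $\nrm{\bF(\bu) - \bF(\bv_h)}_{L^2(\Omega)}^2$ on the right already dominates the left-hand side and the inequality is trivial.
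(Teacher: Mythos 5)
Your argument is correct and follows essentially the same route as the paper's proof: the same error identity obtained by subtracting the two momentum equations with the splitting $\bu-\bu_h=(\bu-\bv_h)+(\bv_h-\bu_h)$, followed by \eqref{eq:F-ineq-1}, \eqref{eq:F-ineq-2}, Young's inequality on the pressure term, \eqref{eq:F-ineq-3}, and absorption of the $\varepsilon$-terms. Your closing remark about controlling the factor $\nrm{|\bD\bv_h|+|\bD\bu_h|}_{L^r(\Omega)}^{2-r}$ addresses a point the paper leaves implicit (its constant is simply stated to depend on $\nrm{F}_{V'}$, which presumes $\bv_h$ has controlled norm, as it does when the lemma is applied with $\bv_h=\pi_V\bu$); just note that the ``trivial otherwise'' fallback needs a little care since the right-hand side of \eqref{eq:error-Dv} also contains the signed multiplier term.
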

	
	\begin{proof}
		From \eqref{eq:mixed-continuous-V} and \eqref{eq:mixed-discrete-V}, we see that, for any $(\bv_h,q_h)\in\Vo_h\times Q_h$, we have
		\begin{equation*}
			\begin{split}
				\langle A\bu - A\bu_h, \bu - \bu_h \rangle_V + \langle G\bu - G\bu_h, \bu - \bu_h \rangle_V = \\
				\langle A\bu - A\bu_h, \bu - \bv_h \rangle_V + \langle G\bu - G\bu_h, \bu - \bv_h \rangle_V \\
				+ \langle B(p - q_h), \bv_h - \bu_h \rangle_V + \langle \lambda - \lambda_h, \gamma_n(\bv_h - \bu_h) \rangle_\Sigma
			\end{split}
		\end{equation*}		
		Using \eqref{eq:F-ineq-1} and \eqref{eq:F-ineq-2}
		\begin{equation*}
			\begin{split}
				\bE(\bu,\bu_h) &\lesssim \varepsilon_1 \bE(\bu,\bu_h) + c_{\varepsilon_1} \bE(\bv,\bv_h) \\
				&+ \langle B(p - q_h), \bv_h - \bu_h \rangle_V + \langle \lambda - \lambda_h, \gamma_n(\bv_h - \bu_h) \rangle_\Sigma
			\end{split}
		\end{equation*}
		for an arbitrary $\varepsilon_1 > 0$. Additionally, by using Young's inequality,
		\begin{align*}
			\langle B(p - q_h), \bv_h - \bu_h \rangle_V \lesssim c_{\varepsilon_2} \nrm{p - q_h}^2_Q + \varepsilon_2 \nrm{\bD(\bu_h - \bv_h)}^2_{L^r(\Omega)}
		\end{align*}
		for any $\varepsilon_2 > 0$. Then, via \eqref{eq:F-ineq-3-1}, and by setting $\varepsilon_1$ and $\varepsilon_2$ sufficiently small, inequality \eqref{eq:error-Dv} is established.
	\end{proof}

	\begin{remark}
		If the pair $V_h\times Q_h$ is divergence free in the sense that $\langle Bq_h, \bw_h\rangle_V= 0$ for all $q_h\in Q_h$ implies that $\nabla\cdot \bw_h = 0$, then the term $\nrm{p - q_h}^2_Q$ in inequality \eqref{eq:error-Dv} can be removed.
	\end{remark}

\subsection{Upper bounds for the pressure and Lagrange multiplier errors}

	We finalise the analysis of the abstract discretisation by bounding the errors for the pressure and the Lagrange multiplier from above.
	
	\begin{lemma}\label{lemma:error-ps}
		Assume that the discrete inf-sup conditions \eqref{eq:inf-sup-VQh} and \eqref{eq:inf-sup-VSh} hold. Let $(\bu,p,\lambda)\in V\times Q\times \Lambda$ be the solution of \eqref{eq:mixed-continuous} and $(\bu_h,p_h,\lambda_h)\in V_h\times Q_h\times \Lambda_h$ of \eqref{eq:mixed-discrete}. Then
		\begin{align}
			\nrm{p - p_h}_Q &\lesssim\bE(\bu,\bu_h)^{1/r'} + \nrm{p - q_h}_Q, \label{eq:error-Q}\\
			\nrm{\lambda - \lambda_h}_{\Sigma'} &\lesssim \bE(\bu,\bu_h)^{1/r'} + \nrm{p - q_h}_Q + \nrm{\lambda - \mu_h}_{\Sigma'}, \label{eq:error-S}
		\end{align}
		for all $q_h\in Q_h$ and $\mu_h\in\Sigma_h$.
	\end{lemma}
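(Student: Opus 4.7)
The plan is to combine triangle inequalities with the discrete inf-sup conditions \eqref{eq:inf-sup-VQh} and \eqref{eq:inf-sup-VSh}, using the difference of the continuous and discrete momentum equations to reduce everything to the nonlinear residual $A\bu - A\bu_h$. The central analytic ingredient is a Lipschitz-type bound of the form
\[
  |\langle A\bu - A\bu_h, \bw\rangle_V| \leq C \nrm{\bF(\bu) - \bF(\bu_h)}_{L^2(\Omega)}^{2/r'} \nrm{\bw}_V \quad \forall \bw\in V,
\]
in the spirit of \cite{belenki2012, hirn2013}; the exponent $2/r'$ is dictated by the scaling $\nrm{\bF(\bv)}_{L^2(\Omega)}^2 = \nrm{\bD\bv}_{L^r(\Omega)}^r$ together with the $(r-1)$-homogeneity of $A\bv = \alpha|\bD\bv|^{r-2}\bD\bv$ in $\bD\bv$. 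The a priori bounds \eqref{eq:bound-solution} and \eqref{eq:bound-approximation} ensure the constant depends only on $\nrm{F}_{V'}$.

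For \eqref{eq:error-Q}, I write $\nrm{p - p_h}_Q \leq \nrm{p - q_h}_Q + \nrm{q_h - p_h}_Q$ for arbitrary $q_h \in Q_h$ and apply \eqref{eq:inf-sup-VQh} to the second summand. Since $\gamma_n\bv_h = 0$ for $\bv_h \in V_{b,h}$, subtracting \eqref{eq:mixed-discrete-V} from \eqref{eq:mixed-continuous-V} eliminates the Lagrange multiplier terms and leaves
\[
  \langle B(p_h - q_h), \bv_h\rangle_V = \langle B(p - q_h), \bv_h\rangle_V - \langle A\bu - A\bu_h, \bv_h\rangle_V.
\]
Bounding the two right-hand terms by the continuity of $B$ and by the Lipschitz estimate above respectively yields \eqref{eq:error-Q}.

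For \eqref{eq:error-S} I argue analogously: split $\nrm{\lambda - \lambda_h}_{\Sigma'} \leq \nrm{\lambda - \mu_h}_{\Sigma'} + \nrm{\mu_h - \lambda_h}_{\Sigma'}$ for $\mu_h \in \Sigma_h$, apply \eqref{eq:inf-sup-VSh} to the second term, and this time subtract the momentum equations tested against a general $\bv_h \in V_h$ (no longer restricted to $V_{b,h}$), yielding
\[
  \langle \mu_h - \lambda_h, \gamma_n\bv_h\rangle_\Sigma = \langle \mu_h - \lambda, \gamma_n\bv_h\rangle_\Sigma + \langle A\bu - A\bu_h, \bv_h\rangle_V - \langle B(p - p_h), \bv_h\rangle_V.
\]
The three pieces are controlled by $\nrm{\lambda - \mu_h}_{\Sigma'}\nrm{\bv_h}_V$ (via continuity of $\gamma_n$), by the Lipschitz bound on $A$, and by $\nrm{p - p_h}_Q\nrm{\bv_h}_V$ (via continuity of $B$). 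Finally I invoke \eqref{eq:error-Q} to replace $\nrm{p - p_h}_Q$ by $\nrm{p - q_h}_Q$ plus the $\bF$-term, closing the argument.

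The main obstacle is the Lipschitz estimate with the sharp exponent $2/r'$, which is nontrivial for $r\in(1,2)$. The natural route passes through the pointwise equivalence $|\bF(\bu) - \bF(\bu_h)|^2 \sim (|\bD\bu| + |\bD\bu_h|)^{r-2}|\bD\bu - \bD\bu_h|^2$, from which one deduces $||\bD\bu|^{r-2}\bD\bu - |\bD\bu_h|^{r-2}\bD\bu_h| \leq C(|\bD\bu| + |\bD\bu_h|)^{(r-2)/2}|\bF(\bu) - \bF(\bu_h)|$; a H\"older application using the $L^r$ control of $|\bD\bu| + |\bD\bu_h|$ from \eqref{eq:bound-solution}--\eqref{eq:bound-approximation} then produces the stated exponent. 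Once this ingredient is available, the remainder reduces to a routine inf-sup manipulation.
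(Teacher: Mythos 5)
Your overall structure is exactly the paper's: subtract the two momentum equations, test against $V_{b,h}$ (so the multiplier terms drop) and invoke \eqref{eq:inf-sup-VQh} for the pressure, then test against all of $V_h$ and invoke \eqref{eq:inf-sup-VSh} for the multiplier, feeding the pressure bound back in; the nonlinear residual is controlled by the Lipschitz-type estimate $|\langle A\bu - A\bu_h, \bw\rangle_V| \leq C\nrm{\bF(\bu)-\bF(\bu_h)}_{L^2(\Omega)}^{2/r'}\nrm{\bw}_V$, which the paper simply cites as \cite[Lemma 2.4]{hirn2013}. So the argument is correct and essentially identical in route.

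The one place where your sketch does not hold up is the justification of that key estimate. Writing $S(P)=|P|^{r-2}P$, you propose $|S(\bD\bu)-S(\bD\bu_h)| \leq C(|\bD\bu|+|\bD\bu_h|)^{(r-2)/2}|\bF(\bu)-\bF(\bu_h)|$ followed by ``a H\"older application using the $L^r$ control of $|\bD\bu|+|\bD\bu_h|$''. For $r\in(1,2)$ the exponent $(r-2)/2$ is negative, so this weight blows up where $|\bD\bu|+|\bD\bu_h|$ is small, and an $L^r$ bound on $|\bD\bu|+|\bD\bu_h|$ gives no control whatsoever on its negative powers; moreover the three-way H\"older exponents $\tfrac1a+\tfrac12+\tfrac1r=1$ would require $a<0$. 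The correct route absorbs the weight pointwise: from $|S(P)-S(Q)|\sim(|P|+|Q|)^{r-2}|P-Q|$ and $|\bF(P)-\bF(Q)|^2\sim(|P|+|Q|)^{r-2}|P-Q|^2$ one checks directly, using only $|P-Q|\leq|P|+|Q|$, that $|S(P)-S(Q)|^{r'}\leq c\,|\bF(P)-\bF(Q)|^{2}$ pointwise; integrating and applying H\"older with exponents $(r',r)$ gives $\nrm{S(\bD\bu)-S(\bD\bu_h)}_{L^{r'}(\Omega)}\leq c\nrm{\bF(\bu)-\bF(\bu_h)}_{L^2(\Omega)}^{2/r'}$ with a constant independent of the a priori bounds \eqref{eq:bound-solution}--\eqref{eq:bound-approximation} (consistent with the lemma's constant depending only on $\beta$ and $\gamma$). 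With that repair the rest of your argument goes through as written.
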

	
	\begin{proof}
		Since $Q_h$ and $\Sigma_h$ are subsets of $Q$ and $\Sigma$ respectively, we can obtain the following equality from \eqref{eq:mixed-continuous-V} and \eqref{eq:mixed-discrete-V}:
		\begin{align}\label{eq:proof-error-QS}
			\begin{split}
				\langle A\bu - A\bu_h, \bv_h \rangle_V + \langle G\bu - G\bu_h, \bv_h \rangle_V = \\
				\langle B(p - p_h), \bv_h \rangle_V + \langle \lambda - \lambda_h, \gamma_n \bv_h \rangle_\Sigma \quad \forall \bv_h\in V_h.		
			\end{split}
		\end{align}
		The inf-sup condition \eqref{eq:inf-sup-VQh} for the pressure space holds over the space $V_{b,h}\subset V_h$ of vector fields with a normal component vanishing on $\Gamma_b$. For $\bv_h\in V_{b,h}$, from equation \eqref{eq:proof-error-QS} we derive
		\[
			\langle B(p_h - q_h), \bv_h \rangle_V  = \langle A\bu - A\bu_h, \bv_h \rangle_V + \langle B(p - q_h), \bv_h \rangle_V.
		\]
		From the inf-sup condition \eqref{eq:inf-sup-VQh} it follows that
		\begin{align}\label{eq:proof-error-QS-1}
			\nrm{p_h - q_h}_Q \lesssim \sup_{\bv_h\in V_{b,h}}{\left( \frac{\langle A\bu - A\bu_h, \bv_h \rangle_V}{\nrm{\bv_h}_V} \right)} + \nrm{p - q_h}_Q.
		\end{align}
		Then, by H\"older's inequality and \cite[Lemma 2.4]{hirn2013}, we have that 
		\begin{align}\label{eq:proof-error-QS-2}
			\sup_{\bv_h\in V_{b,h}}{\left( \frac{\langle A\bu - A\bu_h, \bv_h \rangle_V}{\nrm{\bv_h}_V} \right)} \lesssim \nrm{\bF(\bD\bu) - \bF(\bD\bu_h)}_{L^2(\Omega)}^{2/r'}.
		\end{align}
		Finally, \eqref{eq:error-Q} follows by applying the triangle inequality to $\nrm{p - p_h}_Q$ and using \eqref{eq:proof-error-QS-1} and \eqref{eq:proof-error-QS-2}. The bound \eqref{eq:error-S} follows in the same way. However, in this case, the inf-sup condition \eqref{eq:inf-sup-VSh} is set over the whole space $V_h$, so the friction term in \eqref{eq:proof-error-QS} does not vanish. Following the argument used in \cite[Lemma 2.4]{hirn2013}, we can show that  
		\[
			\nrm{|\bT\bu|^{r-2}\bT\bu - |\bT\bu_h|^{r-2}\bT\bu_h}_{L^2(\Gamma_b)} \lesssim \nrm{\bF(\bT\bu) - \bF(\bT\bu_h)}^{2/r'}_{L^2(\Gamma_b)}
		\]
		and therefore
		\[
			\sup_{\bv_h\in V_h}{\left( \frac{\langle G\bu - G\bu_h, \bv_h \rangle_V}{\nrm{\bv_h}_V} \right)} \lesssim \nrm{\bF(\bT\bu) - \bF(\bT\bu_h)}^{2/r'}_{L^2(\Gamma_b)}.
		\]
		\end{proof}

        Lemmas \ref{lemma:error-estimates-rigid-error}, \ref{lemma:error-Dv}, and \ref{lemma:error-ps} give discretisation error estimates in terms of best approximation results. To derive a convergence result, we require bounds on these best approximations. We discuss this in the context of a finite element discretisation in the next section.

\section{A finite element scheme}\label{sec:fe-scheme}

We now consider a particular finite element discretisation of the mixed problem \eqref{eq:mixed-continuous}. We introduce a non-degenerate (in the sense of \cite[Definition 4.4.13]{brenner2007}) sequence of triangulations $\Tc_h$, where $h>0$ denotes the maximum cell diameter in $\Tc_h$. The set of edges in $\Tc_h$ is denoted by $\Ec_h$; we assume that every edge $e\in \Ec_h$ in $\partial\Omega$ is either in $\overline{\Gamma}_s$, $\overline{\Gamma}_b$ or $\overline{\Gamma}_d$. We write $\Ec_h(\Gamma_s)$ and $\Ec_h(\Gamma_b)$ to denote the edges in $\overline{\Gamma}_s$ and $\overline{\Gamma}_b$ respectively. Associated to each $\Tc_h$ are the finite element spaces $V_h$, $Q_h$ and $\Sigma_h$, defined by
\begin{subequations}\label{eq:fem-spaces}
	\begin{align}
		V_h &= \left\lbrace \bv_h\in \bCc(\Omega) : \bv_h |_c \in \bPc_2(c) \quad \forall c \in\Tc, \quad \bv_h\cdot\bn = 0 \quad \text{on $\Gamma_d$} \right\rbrace, \label{eq:Vh}\\
		Q_h &= \left\lbrace q_h\in L^2(\Omega) : q_h|_c \in \Pc_0(c) \quad \forall c \in \Tc \right\rbrace, \label{eq:Qh}\\
		\Sigma_h &= \left\lbrace \mu_h\in L^2(\Gamma_b) : \mu_h|_e \in \Pc_0(e) \quad \forall e \in \Ec(\Gamma_b) \right\rbrace. \label{eq:Sh}
	\end{align}
\end{subequations}

\subsection{Analysis of the scheme}

The first step in analysing this discretisation is to investigate whether the discrete mixed problem \eqref{eq:mixed-discrete} is well-posed for this choice of $V_h\times Q_h\times \Sigma_h$, subject to the compatibility condition \eqref{eq:compatibility-condition}. Specifically, we must verify the discrete inf-sup conditions \eqref{eq:inf-sup-VQh} and \eqref{eq:inf-sup-VSh}. The pair $V_h\times Q_h$ is well-known to satisfy \eqref{eq:inf-sup-VQh}, see \cite[Proposition 8.4.3]{boffi2013} for the case of $r=2$; the general case $r \in [1,\infty]$ follows from the same arguments by using the interpolation operator $\pi_V$ discussed in Appendix \ref{app-subsec:interpolation-V}. A proof for \eqref{eq:inf-sup-VSh} is presented below using a similar argument to the one presented in \cite[Proposition 3.3]{coorevits2002}.

\begin{lemma}\label{lemma:inf-sup-VSh}
	The finite element pair $V_h$ and $\Sigma_h$ defined in \eqref{eq:fem-spaces} is inf-sup stable in the sense of \eqref{eq:inf-sup-VSh}.
\end{lemma}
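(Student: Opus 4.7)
The plan is a standard Fortin argument applied to the pair $V_h$--$\Sigma_h$. Given $\mu_h\in\Sigma_h$, the continuous inf-sup \eqref{eq:inf-sup-VS} provides, for any $\varepsilon > 0$, a $\bv\in V$ with $\nrm{\bv}_V = 1$ and $\langle \mu_h, \gamma_n\bv\rangle_\Sigma \geq (\gamma_0 - \varepsilon)\nrm{\mu_h}_{\Sigma'}$. The goal is then to build a linear operator $\pi_h: V\to V_h$, uniformly $W^{1,r}$-bounded in $h$, with the Fortin property $\langle \mu_h, \gamma_n(\pi_h\bv - \bv)\rangle_\Sigma = 0$ for all $\mu_h\in\Sigma_h$ and $\bv\in V$. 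Since $\Sigma_h$ consists of functions constant on each edge of $\Ec_h(\Gamma_b)$, this reduces to the local conditions $\int_e (\pi_h\bv - \bv)\cdot\bn\,\dd s = 0$ for every $e\in\Ec_h(\Gamma_b)$. Once $\pi_h$ is in hand, the discrete inf-sup follows at once from the chain
\[
	\sup_{\bv_h\in V_h}\frac{\langle \mu_h, \gamma_n\bv_h\rangle_\Sigma}{\nrm{\bv_h}_V} \geq \frac{\langle \mu_h, \gamma_n\pi_h\bv\rangle_\Sigma}{\nrm{\pi_h\bv}_V} = \frac{\langle \mu_h, \gamma_n\bv\rangle_\Sigma}{\nrm{\pi_h\bv}_V} \geq \frac{\gamma_0 - \varepsilon}{C}\nrm{\mu_h}_{\Sigma'},
\]
with $C$ the uniform operator norm of $\pi_h$.

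I would construct $\pi_h$ in two stages. First, fix a Scott--Zhang-type quasi-interpolant $\pi_{\mathrm{SZ}}: V\to V_h$ that respects the homogeneous Dirichlet condition on $\Gamma_d$ and obeys the standard uniform local $W^{1,r}$ stability and approximation estimates (the interpolant of Appendix \ref{app-subsec:interpolation-V} should serve). Second, correct the normal averages on $\Gamma_b$ using $\Pc_2$ edge bubbles: for each $e\in\Ec_h(\Gamma_b)$, let $\phi_e\in V_h$ be the scalar nodal basis function associated with the midpoint of $e$; it is supported in the unique triangle $T_e$ containing $e$, vanishes on the other edges of $T_e$, and satisfies $\int_e\phi_e\,\dd s = \frac{2}{3}|e|$. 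Since $\Omega$ is polygonal, the outward normal $\bn_e$ is constant along $e$, and I set
\[
	\pi_h\bv = \pi_{\mathrm{SZ}}\bv + \sum_{e\in\Ec_h(\Gamma_b)} c_e\,\phi_e\,\bn_e, \qquad c_e = \frac{1}{\int_e\phi_e\,\dd s}\int_e(\bv - \pi_{\mathrm{SZ}}\bv)\cdot\bn_e\,\dd s,
\]
which satisfies the edge-average condition by construction. The correction does not disturb the Dirichlet data on $\Gamma_d$ because the edges in $\Ec_h(\Gamma_d)$ and $\Ec_h(\Gamma_b)$ are disjoint by the mesh assumption.

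The main obstacle is the uniform $W^{1,r}$ stability of $\pi_h$, which reduces to bounding each local correction $\nrm{c_e\,\phi_e\,\bn_e}_{W^{1,r}(T_e)}$ by the $W^{1,r}$ norm of $\bv$ on the Scott--Zhang patch $\omega_e$. Combining H\"older's inequality on $e$ with a scaled trace inequality and the standard approximation estimates for $\pi_{\mathrm{SZ}}$ yields a bound of the form $|c_e|\leq C\,|e|^{1-2/r}\nrm{\bv}_{W^{1,r}(\omega_e)}$, while a direct scaling argument on the reference element, invoked through shape-regularity, gives $\nrm{\phi_e}_{W^{1,r}(T_e)}\leq C\,|e|^{2/r - 1}$. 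The two powers of $|e|$ cancel, so each correction is locally controlled by $\nrm{\bv}_{W^{1,r}(\omega_e)}$; summing over $\Ec_h(\Gamma_b)$ and using the bounded overlap of the patches yields $\nrm{\pi_h\bv}_V\leq C\nrm{\bv}_V$ with $C$ independent of $h$, closing the argument and delivering $\gamma = (\gamma_0 - \varepsilon)/C$ for any $\varepsilon > 0$. The technical delicacy lies in carrying out these scalings uniformly for $r\in(1,2]$ rather than the Hilbert case treated in \cite{coorevits2002}, and in handling triangles that carry more than one edge of $\Ec_h(\Gamma_b)$, which is harmless since the corresponding bubbles overlap only in a shape-regular fashion.
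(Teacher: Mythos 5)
Your argument is correct and is, at its core, the same Fortin strategy the paper uses: both proofs hinge on a uniformly bounded operator into $V_h$ whose normal trace has the same edge averages as the given function, so that pairing with the piecewise-constant $\mu_h$ is preserved (the paper's own remark in Appendix~\ref{app-subsec:extension} notes that $E\circ\gamma_n$ ``effectively acts as a Fortin operator''). The differences are in the realization. You start from the continuous inf-sup \eqref{eq:inf-sup-VS} with a near-maximizer $\bv\in V$ and build the Fortin operator directly on $V$ as a Scott--Zhang interpolant plus local corrections $c_e\,\phi_e\,\bn_e$ supported in the boundary triangles; the paper instead picks a norm-attaining $\psi\in\Sigma$ via Hahn--Banach (legitimate here since $\Sigma$ is reflexive) and builds an extension operator $E:\Sigma\to V_h$ in two stages: a quasi-interpolant plus quadratic bubble correction $\Pi=\pi_Z+\Pi_2(I-\pi_Z)$ acting on the trace space $Z_h\subset\gamma_n(V_h)$, followed by a uniformly bounded discrete harmonic extension $\gamma_{n,h}^{-1}$ into the interior. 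Your construction is more local and elementary — the stability reduces to the scaling computation $|c_e|\lesssim |e|^{1-2/r}\nrm{\bv}_{W^{1,r}(\omega_e)}$ against $\nrm{\phi_e}_{W^{1,r}(T_e)}\lesssim |e|^{2/r-1}$, which you carry out correctly, and it avoids having to establish uniform boundedness of the discrete harmonic extension; the paper's version separates the purely one-dimensional boundary correction (where the moment-matching lives) from the extension into $\Omega$, which makes the boundary operator $\Pi$ reusable and keeps the $r$-dependent scalings confined to $\Gamma_b$. Both yield a constant $\gamma$ independent of $h$; your route additionally threads the continuous constant $\gamma_0$ through the estimate, whereas the paper's Hahn--Banach start makes the dependence on $C_E$ alone explicit.
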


\begin{proof}
	Let $\mu_h\in\Sigma_h$. By the Hahn-Banach theorem, there is a $\psi\in \Sigma$ such that $\langle \mu_h, \psi\rangle_\Sigma = \nrm{\mu_h}_{\Sigma'}$ and $\nrm{\psi}_\Sigma = 1$. In Appendix \ref{app-subsec:extension} we construct an extension operator $\Phi:\Sigma \to V_h$ which is bounded uniformly with respect to $h$, i.e.~$\nrm{\Phi\psi}_V \lesssim \nrm{\psi}_\Sigma$, and
	\[
		\langle \mu_h, \gamma_n (\Phi\psi)\rangle_\Sigma = \langle \mu_h, \psi\rangle_\Sigma \quad \forall \mu_h\in \Sigma_h,
	\]
	for all $\psi \in \Sigma$. Then
	\begin{align*}
		\nrm{\mu_h}_{\Sigma'} = \frac{\langle \mu_h, \psi\rangle_\Sigma}{\nrm{\psi}_\Sigma} \lesssim \frac{\langle \mu_h, \gamma_n (\Phi\psi)\rangle_\Sigma}{\nrm{\Phi\psi}_V} \leq \sup_{\bv_h\in V_h} \frac{\langle\mu_h, \gamma_n\bv_h\rangle_\Sigma}{\nrm{\bv_h}_V}
	\end{align*}
	and the result follows.
\end{proof}

We end this section with a discussion on the approximability of the mixed system \eqref{eq:mixed-continuous}. We show that the approximate solutions $(\bu_h,p_h,\lambda_h)$ converge to the exact solutions of \eqref{eq:mixed-continuous} as $h\to 0$ under a regularity condition, and we establish a rate of convergence for these approximations. 

\begin{theorem}\label{thm:convergence}
	Assume that the compatibility condition \eqref{eq:compatibility-condition} holds whenever $R_V \neq \{0\}$. Let the triple $(\bu,p,\lambda)\in V\times Q\times \Lambda$ be the solution to \eqref{eq:mixed-continuous} and $(\bu_h,p_h,\lambda_h)\in V_h\times Q_h\times \Lambda_h$ to \eqref{eq:mixed-discrete}. Additionally, assume that $(\bu,p,\lambda)\in W^{2,r}(\Omega) \times W^{1,r'}(\Omega) \times W^{1 - 1/r',r'}(\Gamma_b)$ and $\bF(\bD\bu)\in \bW^{1,2}(\Omega)$ and $\bF(\bT\bu)\in \bW^{1,2}(\Gamma_b)$. Then
	\begin{subequations}\label{eq:error-estimate}
	\begin{align}
		\nrm{\bu - \bu_h}_V &\lesssim h, \label{eq:error-estimate-V} \\
		\nrm{p - p_h}_Q + \nrm{\lambda - \lambda_h}_{\Sigma'} &\lesssim h^{2/r'} + h . \label{eq:error-estimate-QS}
	\end{align}
	\end{subequations}

\end{theorem}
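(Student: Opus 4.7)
The plan is to feed the abstract error estimates of Lemmas \ref{lemma:error-estimates-rigid-error}, \ref{lemma:error-Dv} and \ref{lemma:error-ps} with finite element interpolants of $\bu$, $p$, and $\lambda$ into $V_h$, $Q_h$, and $\Sigma_h$, and then to bound the two consistency pairings $\langle\lambda, \gamma_n(\bv_h - \bu_h)\rangle_\Sigma$ and $\langle\lambda - \lambda_h, \gamma_n(\bv_h - \bu_h)\rangle_\Sigma$ appearing on the right-hand sides. The announced orders $h$ and $h^{2/r'}$ will emerge from (i) standard $O(h)$ Bramble--Hilbert estimates for four interpolants, (ii) a shifted Orlicz bound for $\bF$ from \cite{belenki2012, hirn2013}, and (iii) the nonlinear conversion $\nrm{\bF(\bu)-\bF(\bu_h)}_{L^2(\Omega)}^2 \mapsto \nrm{\bF(\bu)-\bF(\bu_h)}_{L^2(\Omega)}^{2/r'}$ built into Lemma \ref{lemma:error-ps}.

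For the four interpolation ingredients I would take a Fortin-type modification $\pi_V\bu \in \Vo_h$ of a Scott--Zhang operator (using \eqref{eq:inf-sup-VQh} to preserve the discrete divergence, so $\pi_V\bu \in \Vo_h$ since $\nabla\cdot\bu = 0$), and $L^2$-projections $q_h = P_Q p$ and $\mu_h = P_\Sigma \lambda$ onto the piecewise-constant spaces $Q_h$ and $\Sigma_h$. Standard theory then delivers $\nrm{\bu - \pi_V\bu}_V \leq C h \nrm{\bu}_{\bW^{2,r}(\Omega)}$, the shifted $N$-function estimate $\nrm{\bF(\bu) - \bF(\pi_V\bu)}_{L^2(\Omega)} \leq C h \nrm{\bF(\bu)}_{W^{1,2}(\Omega)}$, $\nrm{p - P_Q p}_Q \leq C h \nrm{p}_{W^{1,r'}(\Omega)}$, and (by duality against the smoother test space $\Sigma = W^{1-1/r,r}(\Gamma_b)$) $\nrm{\lambda - P_\Sigma \lambda}_{\Sigma'} \leq C h \nrm{\lambda}_{W^{1-1/r',r'}(\Gamma_b)}$.

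The main work then lies in the two consistency pairings. Decomposing $\bv_h - \bu_h = (\bv_h - \bu) + (\bu - \bu_h)$, the pairing with $(\bv_h - \bu)$ is controlled directly by H\"older, the trace inequality, and the uniform bound \eqref{eq:bound-solution}--\eqref{eq:bound-approximation} on $\nrm{\lambda-\lambda_h}_{\Sigma'}$, giving an $O(h)$ contribution. The pairing with $(\bu - \bu_h)$ is reorganised using the continuous complementarity $\langle\lambda, \gamma_n \bu\rangle_\Sigma = 0$ and its discrete counterpart \eqref{eq:discrete-contact-conditions} to leave the two terms $-\langle\lambda_h, \gamma_n\bu\rangle_\Sigma - \langle\lambda, \gamma_n\bu_h\rangle_\Sigma$; the first is non-positive because $\lambda_h \leq 0$ and $\gamma_n\bu \leq 0$ pointwise, and the non-conforming second term is treated by the key observation that, since $\Sigma_h$ consists of piecewise constants on edges, $P_\Sigma\lambda \in \Lambda_h$, and hence \eqref{eq:discrete-contact-conditions} yields $\langle P_\Sigma\lambda, \gamma_n\bu_h\rangle_\Sigma \geq 0$. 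Inserting this and using $L^2$-orthogonality of $P_\Sigma$ reduces $-\langle\lambda, \gamma_n\bu_h\rangle_\Sigma$ to a product $\nrm{P_\Sigma\lambda - \lambda}_{L^{r'}(\Gamma_b)}\,\nrm{\gamma_n\bu_h - P_\Sigma \gamma_n\bu_h}_{L^r(\Gamma_b)}$ of fractional approximation errors which, combining the regularity of $\lambda$, the $W^{1,r}(\Gamma_b)$-regularity of $\gamma_n\bu$ inherited from $\bu \in \bW^{2,r}(\Omega)$, and the uniform $W^{1-1/r,r}$-boundedness of $\gamma_n\bu_h$, gives the desired contribution via $1/r + 1/r' = 1$.

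Inserting these bounds into Lemma \ref{lemma:error-Dv} yields $\nrm{\bF(\bu) - \bF(\bu_h)}_{L^2(\Omega)}^2 \leq Ch^2$; feeding this into Lemma \ref{lemma:error-ps} produces $\nrm{p - p_h}_Q + \nrm{\lambda - \lambda_h}_{\Sigma'} \leq C(h^{2/r'} + h)$, i.e.~\eqref{eq:error-estimate-QS}; and combining Lemma \ref{lemma:error-estimates-rigid-error} with \eqref{eq:F-ineq-3} and Theorem \ref{thm:korn-projection} gives \eqref{eq:error-estimate-V}. The hard part will be extracting the sharp $h$-exponent from the non-conforming pairing $-\langle\lambda, \gamma_n\bu_h\rangle_\Sigma$: this balancing act relies crucially on the sign-preservation property of the piecewise-constant $L^2$-projection onto $\Sigma_h$, on the discrete complementarity condition \eqref{eq:discrete-contact-conditions}, and on the precise matching of fractional trace-regularity exponents on $\Gamma_b$, so that the $\Pc_0$-choice for $\Sigma_h$ is not merely convenient but genuinely drives the estimate.
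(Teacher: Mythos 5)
Your overall architecture matches the paper's: instantiate Lemmas \ref{lemma:error-estimates-rigid-error}, \ref{lemma:error-Dv} and \ref{lemma:error-ps} with interpolants and control the Lagrange-multiplier consistency pairings through the complementarity relations. The gap is quantitative but fatal to the stated rates: the pairing $\langle\lambda-\lambda_h,\gamma_n(\bv_h-\bu_h)\rangle_\Sigma$ in Lemma \ref{lemma:error-Dv} must be $O(h^2)$, not $O(h)$, because it sits on the right-hand side of a bound for the \emph{squared} quantity $\nrm{\bF(\bu)-\bF(\bu_h)}_{L^2(\Omega)}^2$. Both of your contributions are only $O(h)$: (i) you bound $\langle\lambda-\lambda_h,\gamma_n(\bv_h-\bu)\rangle_\Sigma$ by H\"older against the a priori bound $\nrm{\lambda-\lambda_h}_{\Sigma'}\leq C$, which gives $O(1)\cdot O(h)$; (ii) your non-conforming term $\langle\lambda-P_\Sigma\lambda,\,\gamma_n\bu_h-P_\Sigma\gamma_n\bu_h\rangle_\Sigma$ pairs an $O(h^{1/r})$ error (from $\lambda\in W^{1-1/r',r'}=W^{1/r,r'}$) with an $O(h^{1/r'})$ error, since $\gamma_n\bu_h$ is only uniformly bounded in $W^{1-1/r,r}(\Gamma_b)=W^{1/r',r}(\Gamma_b)$; as you note, $1/r+1/r'=1$, so the product is $O(h)$, not $O(h^2)$. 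With only $O(h)$ here you obtain $\nrm{\bF(\bu)-\bF(\bu_h)}_{L^2(\Omega)}\lesssim h^{1/2}$, which loses half an order everywhere: the non-rigid velocity error degrades to $O(h^{1/2})$ via \eqref{eq:F-ineq-3} and Theorem \ref{thm:korn-projection}, and the pressure/multiplier errors to $O(h^{1/r'})$. Your concluding assertion $\nrm{\bF(\bu)-\bF(\bu_h)}^2_{L^2(\Omega)}\leq Ch^2$ therefore does not follow from the bounds you establish.

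The missing ingredient is the choice of velocity interpolant. The paper uses the Girault--Scott operator $\pi_V$, which preserves not only the discrete divergence \eqref{eq:div-preserving} but also the edge moments of the normal trace, \eqref{eq:K-preserving}: $\langle\mu_h,\gamma_n(\pi_V\bu-\bu)\rangle_\Sigma=0$ for all $\mu_h\in\Sigma_h$. This does two things a divergence-corrected Scott--Zhang operator does not. First, it places $\pi_V\bu$ in $K_h$ (hence in $\Ko_h$), so the $\lambda_h$-contribution is disposed of by discrete complementarity. Second, it lets you replace $\lambda-\lambda_h$ by $\lambda-\pi_\Sigma\lambda$ in the pairing against $\gamma_n(\pi_V\bu-\bu)$, turning that piece into a product of two first-order interpolation errors, hence $O(h^2)$. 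The remaining non-conforming piece is then rewritten, using \eqref{eq:K-preserving} and the self-adjointness of the piecewise-constant $L^2$-projection, as $\langle\lambda-\pi_\Sigma\lambda,\,\pi_\Sigma\gamma_n\bu-\gamma_n\bu\rangle_\Sigma$ --- a pairing of the multiplier's interpolation error against the interpolation error of the \emph{exact} trace $\gamma_n\bu$, which inherits enough regularity from $\bu\in \bW^{2,r}(\Omega)$, again $O(h^2)$ --- rather than against $\gamma_n\bu_h$, whose uniform trace regularity is too weak to do better than $O(h^{1/r'})$. Your sign observations ($P_\Sigma\lambda\in\Lambda_h$ and $-\langle\lambda_h,\gamma_n\bu\rangle_\Sigma\leq 0$) are correct and are used in the paper as well, but without the moment-preserving interpolant the rates in \eqref{eq:error-estimate} are out of reach.
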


\begin{proof}
	We proceed by first finding a suitable upper bound for the term involving the Lagrange multiplier in \eqref{eq:error-Dv}. Since $\langle \lambda_h,\gamma_n\bu_h\rangle_\Sigma = 0$ and $\langle \mu_h,\gamma_n\bv_h\rangle_\Sigma\geq 0$ for all $(\bv_h,\mu_h)\in K_h\times \Lambda_h$, one can show that
	\begin{align}\label{eq:proof-error-estimates-1}
		\begin{split}
			\langle \lambda - \lambda_h, \gamma_n(\bv_h - \bu_h)\rangle_\Sigma & \leq \langle \lambda - \mu_h, \gamma_n (\bv_h - \bu) \rangle_\Sigma \\
			&+ \langle \lambda - \mu_h, \gamma_n (\bu - \bu_h) \rangle_\Sigma + \langle \mu_h, \gamma_n\bv_h \rangle_\Sigma
		\end{split}
	\end{align}
	for all $(\bv_h,\mu_h)\in K_h\times \Lambda_h$. By applying Young's inequality in \eqref{eq:proof-error-estimates-1} and using inequalities \eqref{eq:error-decomposition}, \eqref{eq:error-vel-rigid}, and \eqref{eq:error-Dv}, we arrive at 
	\begin{align}\label{eq:proof-error-estimates-2}
		\nrm{\bu - \bu_h}_V \lesssim \bE(\bu,\bv_h)^{1/2} + \nrm{p - q_h}_Q + \nrm{\lambda - \mu_h}_{\Sigma'} + \langle \mu_h, \gamma_n\bv_h \rangle_\Sigma
	\end{align}
	for all $(\bv_h,q_h,\mu_h)\in \Ko_h\times Q_h \times \Lambda_h$.
	
	Let $\pi_V:V\to V_h$ by the interpolation operator introduced in Appendix \ref{app-subsec:interpolation-V}. Additionally, let $\pi_Q:Q\to Q_h$ and $\pi_\Sigma:L^2(\Gamma_b)\to \Sigma_h$ be standard interpolation operators onto the space of piecewise-constant functions. We refer to \cite{ern2017} and the results in Appendix \ref{app-subsec:interpolation} for proofs of optimal interpolation error estimates in the $Q$ and $\Sigma'$ norms. From the properties of these interpolation operators it follows that $\pi_V\bu\in \Ko_h$ and $\pi_\Sigma\lambda\in\Lambda_h$. Additionally, we have that $\int_e \bu\cdot\bn\,\dd s = \int_e (\pi_V\bu)\cdot\bn\,\dd s$ for all $e\in \Ec$, so 
	\begin{align*}
		\langle \pi_\Sigma\lambda, \gamma_n\pi_V\bu \rangle_\Sigma = \langle \lambda, \pi_\Sigma(\gamma_n\bu)\rangle_\Sigma = \langle \lambda, \pi_\Sigma(\gamma_n\bu) - \gamma_n\bu\rangle_\Sigma.
	\end{align*}
	Since $\langle \pi_\Sigma\lambda, \pi_\Sigma(\gamma_n\bu) - \gamma_n\bu\rangle_\Sigma = 0$, we have that 
	\begin{align}\label{eq:proof-error-estimates-3}
		\langle \pi_\Sigma\lambda, \gamma_n\pi_V\bu \rangle_\Sigma = \langle \lambda - \pi_\Sigma\lambda, \pi_\Sigma(\gamma_n\bu) - \gamma_n\bu\rangle_\Sigma.
	\end{align}
	Therefore, by setting $\bv_h = \pi_V\bu$, $q_h = \pi_Qp$, and $\mu_h = \pi_\Sigma\lambda$ in \eqref{eq:proof-error-estimates-2} and using \eqref{eq:proof-error-estimates-3}, we can show that 
	\begin{align}
		\nrm{\bu - \bu_h}_V \lesssim \bE(\bu,\pi_V\bu)^{1/2} + \nrm{p - \pi_Qp}_Q + \nrm{\lambda - \pi_\Sigma\lambda}_{\Sigma'} + \nrm{\gamma_n\bu - \pi_\Sigma(\gamma_n\bu)}_{\Sigma}.
	\end{align}	
	We then establish \eqref{eq:error-estimate-V} with the approximation properties of the interpolation operators presented in Appendices \ref{app-subsec:interpolation} and \ref{app-subsec:interpolation-V} for $\pi_\Sigma$ and $\pi_V$ respectively, and \cite{ern2017} for $\pi_Q$. The estimate \eqref{eq:error-estimate-QS} then follows from Lemma \ref{lemma:error-ps}.
\end{proof}

\begin{remark}
	The velocity and pressure error estimates coincide with those obtained in \cite[Theorem 2.14]{belenki2012} and in \cite[Theorem 3.1]{hirn2013} for the $r$-Stokes system without contact or friction boundary conditions. This indicates that these boundary conditions and the Lagrange multiplier do not reduce the order of convergence. This may be due to the use of piecewise constant elements for $\Sigma_h$. In \cite{coorevits2002}, a proof with non-optimal convergence rates is presented for the case when continuous piecewise quadratic polynomials are used for the Lagrange multiplier.
\end{remark}

\subsection{Discrete algebraic formulation}

We now present an algebraic counterpart of \eqref{eq:mixed-discrete} using the finite element spaces specified in \eqref{eq:fem-spaces} in terms of matrices and vectors. Let $V_h = \Span{\{\bv_i\}}_{i=1}^{N_v}$, $Q_h = \Span{\{q_j\}}_{j=1}^{N_q}$ and $\Sigma_h = \Span{\{\mu_k\}}_{k=1}^{N_\mu}$, where $N_v = \dim{V_h}$, $N_q = \dim{Q_h}$ and $N_\mu = \dim{\Sigma_h}$. For the functions $(\bu_h, p_h, \lambda_h) \in V_h\times Q_h\times \Sigma_h$, we write $\mathbf{u}$, $\mathbf{p}$ and $\blambda$ for the vectors containing the respective degrees of freedom (DoFs) in $\RR^{N_v}$, $\RR^{N_q}$ and $\RR^{N_\mu}$. In order to write an algebraic counterpart of \eqref{eq:mixed-discrete-S}, we need to introduce the discrete normal trace operator 
\begin{align*}
	\boldsymbol{\gamma_n} : \RR^{N_v} \to \RR^{N_\mu}
\end{align*}
that returns the average normal components of a vector $\bv_h\in V_h$ along the edges on $\Gamma_b$. That is, for each $i \in \{1,2,...,N_\mu\}$
\begin{align*}
	(\boldsymbol{\gamma_n} \mathbf{v})_i = \frac{1}{|e_i|} \int_{e_i} \bv_h\cdot\bn\,\dd s,
\end{align*}
where $e_i \in \Ec(\Gamma_b)$ is the unique edge along $\Gamma_b$ associated to the degree of freedom in $\Sigma_h$ with index $i$. Then, the algebraic counterpart of  \eqref{eq:mixed-discrete} can be written in terms of matrices and vectors as
\begin{subequations}\label{eq:algebraic}
	\begin{align}
		\mathbf{A}_\varepsilon(\mathbf{u}) + \mathbf{G}_\varepsilon(\mathbf{u}) - \mathbf{B} \mathbf{p} - \mathbf{D}\boldsymbol{\lambda} &= \mathbf{f},\label{eq:algebraic-1}\\
		\mathbf{B}^\top\mathbf{u} &= 0, \label{eq:algebraic-2}\\
		\blambda + \mathbf{C}(\blambda,\mathbf{u}) &= 0 \label{eq:algebraic-3}.
	\end{align}
\end{subequations}
Here, we have introduced the matrices $\mathbf{B}\in\RR^{N_v\times N_q}$ and $\mathbf{D}\in\RR^{N_v\times N_\mu}$, the vector $\mathbf{f}\in\RR^{N_v}$ and the nonlinear operators $\mathbf{A}_\varepsilon:\RR^{N_v}\to\RR^{N_v}$, $\mathbf{G}_\varepsilon:\RR^{N_v}\to\RR^{N_v}$ and $\mathbf{C}:\RR^{N_\mu}\times\RR^{N_v}\to \RR^{N_\mu}$. The matrices are given by the elements $\mathbf{B}_{ij} = \langle Bq_j, \bv_i\rangle_V$ and $\mathbf{D}_{ij} = \langle \mu_j, \gamma_n\bv_i \rangle_{\Sigma}$ and the vector by $\mathbf{f}_i = \langle F, \bv_i \rangle_V$. The nonlinear operators are defined as
\begin{align}
	[\mathbf{A}_\varepsilon(\mathbf{u})]_i &= \int_\Omega \alpha \left(\varepsilon + |\bD\bu|\right)^{r-2} (\bD \bu : \bD \bv_i)\,\dd x, \label{eq:A-discrete}\\
	[\mathbf{G}_\varepsilon(\mathbf{u})]_i &= \int_\Omega \tau \left(\varepsilon + |\bT\bu|\right)^{r-2} (\bT \bu \cdot \bT \bv_i)\,\dd x, \label{eq:G-discrete}\\
	\mathbf{C}(\blambda,\mathbf{u}) &=  \max{\left\lbrace 0, -\blambda + c (\boldsymbol{\gamma_n}\mathbf{u}) \right\rbrace}, \label{eq:complementarity}
\end{align}
for an arbitrary $c>0$ and a regularisation term $\varepsilon > 0$. This regularisation term is commonly used to avoid numerical complications caused when $|\bD\bu|$ or $|\bT\bu|$ is equal to or very close to zero \cite{jouvet2012}. In \eqref{eq:complementarity}, the $\max$ operation is understood to be carried out componentwise on each of the elements in the vector $-\blambda + c (\boldsymbol{\gamma_n}\mathbf{u})\in\RR^{N_\mu}$. The use of the operator $\mathbf{C}$ in \eqref{eq:complementarity} is a common way of expressing contact conditions. A particular advantage is that the nonlinear system \eqref{eq:algebraic} can be solved with a semi-smooth Newton method that enjoys superlinear convergence in a neighbourhood of the solution \cite{hintermuller2002}. Equation \eqref{eq:algebraic-3} is equivalent to \eqref{eq:mixed-discrete-S} whenever $\Sigma_h$ is defined as in \eqref{eq:Sh}. By solving \eqref{eq:algebraic-3} we enforce
\begin{align}\label{eq:algebraic-contact}
	\boldsymbol{\gamma_n} \mathbf{u} \leq 0, \quad \boldsymbol{\lambda} \leq 0 \quad \text{and} \quad (\boldsymbol{\gamma_n} \mathbf{u})\cdot \boldsymbol{\lambda} = 0,
\end{align}	
exactly, which is the algebraic equivalent of the discrete contact conditions \eqref{eq:discrete-contact-conditions}.

\subsection{Numerical results}

We present numerical results computed for a Stokes variational inequality with a manufactured solution on the domain $\Omega = (0,1)^2$. Computations of subglacial cavitation with the algorithm presented in this document can be found in \cite{dediego2022}. Subglacial cavitation is a time-dependent problem in which a free surface is evolved with an advection equation. We do this in \cite{dediego2022} by coupling the algorithm presented in this study with a solver for the advection equation.

The manufactured solution considered here is taken from \cite{belenki2012} and is given by
\begin{align}\label{eq:exact-sol}
	\hat{\bu}(\bx) = |\bx|^{\alpha - 1} (x_2, -x_1)^\top, \quad \hat{p}(\bx) = |\bx|^\gamma,
\end{align}
where the parameters $\alpha$ and $\gamma$ are chosen such that $\bu\in \bW^{2,r}(\Omega)$, $p\in W^{1,r'}(\Omega)$, $\bF(\bD\bu)\in \bW^{1,2}(\Omega)$ and $\bF(\bT\bu)\in \bW^{1,2}(\Gamma_b)$ hold. This is ensured whenever $\alpha > 1$ and $\gamma > -1 + \frac{2}{r}$, so we set $\alpha = 1.01$ and $\gamma = -1 + \frac{2}{r} + 0.01$ in order to be critically close to the regularity assumed in Theorem \ref{thm:convergence}.

Contact boundary conditions are enforced on the lower boundary $\{ y = 0\}$. Given the velocity and pressure fields defined in \eqref{eq:exact-sol}, we have that
\begin{align*}
	(\hat{\bu}\cdot\bn)(x_1) = - x_1^{\alpha}, \quad \hat{\lambda}(x_1) = -x_1^\gamma,
\end{align*}
on $\{ y = 0\}$. In order to define the contact boundary conditions in such a way that both the kinematic and dynamic conditions are active, we define the ``obstacles''
\begin{align*}
	\chi(x_1) = \begin{cases}
		(\hat{\bu}\cdot\bn)(x_1) & \text{if $x_1 \leq 0.5$} \\
		-2^{-\alpha} & \text{if $x_1 > 0.5$} \\
	\end{cases},
	\quad
	\rho(x_1) = \begin{cases}
		0 & \text{if $x_1 \leq 0.5$} \\
		\hat{\lambda}(x_1) & \text{if $x_1 > 0.5$} \\
	\end{cases}.
\end{align*}
Then, for this numerical test we solve the $r$-Stokes system \eqref{eq:main} together with the boundary conditions
\begin{subequations}\label{eq:bc-exact}
\begin{align}
	\bu\cdot\bn \leq \chi, \quad \lambda \leq \rho \quad \text{and} \quad (\bu\cdot\bn - \chi)(\lambda - \rho) = 0 \quad  \text{on $\{y = 0\}$}, \\
	\bu\cdot\bn = \hat{\bu}\cdot\bn \quad \text{on $\{x = 0\}$}, \label{eq:bc-exact-un}\\
	\sigma_{nn} = \hat{\sigma}_{nn} \quad \text{on $\partial\Omega \setminus (\{y = 0\}\cup \{x=0\})$},\\
	\sigma_{nt} = \hat{\sigma}_{nt} \quad \text{on $\partial\Omega$},
\end{align}
\end{subequations}
where $\hat{\sigma} = \sigma(\hat{\bu}, \hat{p})$. Boundary conditions are set for the normal velocity along $\{x=0\}$ in \eqref{eq:bc-exact-un} to mimic the boundary conditions enforced at $\Gamma_d$ and make $\dim{R_V} = 1$. In this case, $R_V$ is a one-dimensional vector space containing vertical motions. Therefore, 
\[
	R_V \cap K = \left\lbrace (0,\theta): \theta \leq \min{\chi} \right\rbrace,
\]
and, since $ F = A_\varepsilon\hat{\bu} + G_\varepsilon\hat{\bu} - B \hat{p} - \gamma_n'\hat{\lambda}$, we have that
\[
	\langle F, \br \rangle_V = \theta \int_0^1 x^\gamma\,\dd x < 0
\]
for all $\br = (0, \theta)$ with $\theta < 0$ (note that $\min{\chi} < 0$). This proves that the compatibility condition \eqref{eq:compatibility-condition} holds and the system is well-posed. Although the functional setting of this numerical test differs slightly from the setting studied in this paper, the numerical test contains the fundamental elements of the setting analysed.

We compute solutions to the $r$-Stokes system on $\Omega = (0,1)^2$ with boundary conditions \eqref{eq:bc-exact} on a sequence of uniformly refined meshes using the finite element spaces in \eqref{eq:fem-spaces}. The regularisation parameter in \eqref{eq:A-discrete} is set to $\varepsilon = 10^{-4}$. In Glen's law \eqref{eq:glens_law} we fix $\mathcal{A} = 0.5$ and for the friction boundary condition we set $\tau = 1$. We consider the values $n = 1$, 2, 3 and 4, which correspond with $r = 2$, 1.5, 1.33 and 1.25. The resulting orders of convergence for the velocity are shown in Tables \ref{table:orders} and \ref{table:orders_vLr}, and for the pressure and Lagrange multiplier in Table \ref{table:orders_pl}. For the Lagrange multiplier error, we use the discrete norm
\[
	\nrm{\mu_h}_{\Sigma',h} = h^{1/r'} \nrm{\mu_h}_{L^{r'}(\Gamma_b)},
\]
which should yield the same order of convergence as the one that would be obtained with the $\Sigma'$ norm by a standard inverse inequality.

\begin{table}
\centering
\caption{Calculated orders of convergence for the velocity computed with a manufactured solution together with estimated orders according to Theorem \ref{thm:convergence}.}
\label{table:orders}
	\begin{tabular}{c|cccc|cccc}
	\toprule
		& \multicolumn{4}{|c|}{$\nrm{\bD(\bu) - \bD(\bu_h)}_{L^r(\Omega}$} & \multicolumn{4}{c}{$\nrm{\bu - \bu_h}_V$} \\
		\cmidrule{2-9}
		$h \backslash r$ &2.00& 1.50& 1.33& 1.25 & 2.00& 1.50& 1.33& 1.25 \\
		\midrule
		$3.54 \times 10^{-1}$& - & - &- & - &- & - &- & - \\
		$1.77 \times 10^{-1}$& 0.96& 1.05& 1.08& 1.11& 0.97& 1.10& 1.14& 1.18\\
		$8.84 \times 10^{-2}$& 0.97& 1.03& 1.05& 1.07& 0.98& 1.06& 1.09& 1.12\\
		$4.42 \times 10^{-2}$& 0.97& 1.02& 1.04& 1.06& 0.98& 1.04& 1.06& 1.08\\
		$2.21 \times 10^{-2}$& 0.97& 1.02& 1.03& 1.04& 0.98& 1.03& 1.04& 1.06\\
		$1.10 \times 10^{-2}$& 0.97& 1.01& 1.02& 1.03& 0.98& 1.02& 1.03& 1.04\\
		\midrule
		1 & 1.00& 1.00 & 1.00 & 1.00 & 1.00& 1.00 & 1.00 & 1.00 \\ 
	\bottomrule
	\end{tabular}	
\end{table}

\begin{table}
\centering
\caption{Calculated orders of convergence for the velocity in the $L^r(\Omega)$-norm computed with a manufactured solution.}
\label{table:orders_vLr}
	\begin{tabular}{c|cccc}
	\toprule
		& \multicolumn{4}{|c|}{$\nrm{\bu - \bu_h}_{L^r(\Omega}$} \\
		\cmidrule{2-5}
		$h \backslash r$ &2.00& 1.50& 1.33& 1.25 \\
		\midrule
		$3.54 \times 10^{-1}$ &- & - &- & - \\
		$1.77 \times 10^{-1}$ & 1.97& 2.07& 1.99& 1.85\\
		$8.84 \times 10^{-2}$ & 1.95& 1.95& 1.87& 1.72\\
		$4.42 \times 10^{-2}$ & 1.96& 1.96& 1.86& 1.71\\
		$2.21 \times 10^{-2}$ & 1.96& 1.97& 1.87& 1.72\\
		$1.10 \times 10^{-2}$ & 1.96& 1.98& 1.87& 1.74\\
	\bottomrule
	\end{tabular}	
\end{table}

\begin{table}
\centering
\caption{Calculated orders of convergence for the pressure and the Lagrange multiplier computed with a manufactured solution together with estimated orders according to Theorem \ref{thm:convergence}.}
\label{table:orders_pl}
	\begin{tabular}{c|cccc|cccc}
	\toprule
		& \multicolumn{4}{|c|}{$\nrm{p-p_h}_Q$} & \multicolumn{4}{c}{$\nrm{\lambda - \lambda_h}_{\Sigma',h}$} \\
		\cmidrule{2-9}
		$h \backslash r$ &2.00& 1.50& 1.33& 1.25 & 2.00& 1.50& 1.33& 1.25 \\
		\midrule
		$3.54 \times 10^{-1}$& - & - &- & - &- & - &- & - \\
		$1.77 \times 10^{-1}$& 0.88& 0.93& 0.96& 0.98& 1.00& 1.00& 1.00& 0.98\\
		$8.84 \times 10^{-2}$& 0.90& 0.94& 0.97& 0.98& 1.00& 1.00& 0.98& 0.93\\
		$4.42 \times 10^{-2}$& 0.91& 0.95& 0.97& 0.95& 1.01& 1.00& 0.96& 0.87\\
		$2.21 \times 10^{-2}$& 0.92& 0.95& 0.97& 0.90& 1.01& 1.00& 0.93& 0.80\\
		$1.10 \times 10^{-2}$& 0.93& 0.96& 0.96& 0.84& 1.01& 1.00& 0.88& 0.73\\
		\midrule
		$2/r' $& 1.00& 0.67& 0.5& 0.4& 1.00& 0.67& 0.5& 0.4 \\
	\bottomrule
	\end{tabular}	
\end{table}

Table \ref{table:orders} indicates that the orders of convergence for the velocity in the seminorm $\nrm{\bD(\cdot)}_{L^r(\Omega)}$ and in the $V$-norm coincide. This demonstrates that the presence of rigid modes in the velocity space does not affect the accuracy of the velocity computation in the $V$-norm. The computed orders of convergence for the velocity in the $V$-norm coincide with those estimated in \eqref{eq:error-estimate-V}. In Table \ref{table:orders_vLr} we see that the orders of convergence for the velocity in the $L^r(\Omega)$-norm appear to increase by one when compared to the orders computed with the $V$-norm. On the other hand, the orders of convergence obtained for the pressure appear to be independent of $r$. A closely related problem (without contact boundary conditions) is solved in the work of Belenki et al.~\cite{belenki2012}. In the work of Belenki et al., the problem is formulated as an $r$-Stokes problem with Dirichlet boundary conditions and the MINI element is used for the velocity and pressure. Interestingly, their numerical results deliver the predicted orders of convergence for the pressure error. Hence, the apparent suboptimality of \eqref{eq:error-estimate-QS} for the pressure could be due to the finite elements used here or to the presence of contact boundary conditions and a Lagrange multiplier. Regarding the Lagrange multiplier, the estimated orders of convergence are exceeded, but a dependence on $r$ is observed.

\section{Conclusions}

In this study, we present a Stokes variational inequality that arises when modelling a symmetrical marine ice sheet. We prove the well-posedness of this system whenever the subspace of rigid modes in the velocity space is of dimension at most one under the condition that a compatibility condition holds in Theorem \ref{thm:continuous-well-posed}. We consider a family of finite element discretisations for this problem in Section \ref{sec:abstract-discretisation} and prove an analogous well-posedness result for the discrete system in Theorem \ref{thm:discrete-well-posed}. Using the techniques from \cite{belenki2012, hirn2013}, we then establish error estimates for a particular finite element discretisation in Theorem \ref{thm:convergence}. These error estimates, which are verified with a numerical test using a manufactured solution, indicate that the presence of rigid modes and the nonlinearity of the friction boundary condition do not affect the order of convergence of the scheme.

The results from this study give a theoretical justification for using a finite element discretisation belonging to the family considered in Section \ref{sec:abstract-discretisation} in glaciological applications. Moreover, this analysis can be extended to different contact problems in glaciology such as the subglacial cavitation problem, where the subspace of rigid modes present in the velocity space is of dimension one whenever Dirichlet boundary conditions are enforced for the tangential component of the velocity on the top boundary, as done in \cite{gagliardini2007}. This extension would require taking into consideration the presence of non-homogeneous boundary conditions for the velocity and the use of a periodic domain. However, if instead of Dirichlet boundary conditions, we enforce Neumann boundary conditions on the top boundary, as described in \cite{dediego2022}, the space of rigid modes is then of dimension two. This situation would require a more complicated extension that should be considered in future work. Subglacial cavitation is considered in \cite{dediego2022} and the Stokes variational inequality that arises is solved using the finite element discretisation from Section \ref{sec:fe-scheme}.

A major assumption of this paper is that the domain is two-dimensional. An extension of the analysis presented here to three dimensions would require a careful consideration of the rigid modes present in the velocity space, since the space of rigid modes in three dimensions is larger than in two dimensions. However, in most problems of interest, three dimensional marine ice sheets are considered to be enclosed within two lateral walls, see for example \cite{favier2012}. In this case, if the lateral walls and the bedrock are flat, the space of rigid modes in $V$ is once again reduced to vertical movements and is therefore one-dimensional. As a consequence, much of the analysis from this paper would still be valid in three dimensions. However, the extension operator presented in Appendix \ref{app-subsec:extension}, used to prove Lemma~\ref{lemma:inf-sup-VSh}, relies heavily on the fact that the domain is two-dimensional. Therefore, the choice of finite elements used to solve the variational inequality would have to be chosen and studied carefully.

		
\appendix

\section{Equivalence of formulations}\label{app:equivalence-forms}

In this appendix we demonstrate the equivalence between the strong formulation \eqref{eq:main} of the contact problem with boundary conditions \eqref{eq:bc-gammas}-\eqref{eq:bc-gammad}, the variational inequality \eqref{eq:vi-continuous}, the minimisation of $\Jc$, defined in \eqref{eq:Jc-functional}, and the mixed formulation \eqref{eq:mixed-continuous}. This analysis is similar to the one presented in \cite{chen2013}, with the difference that in this case we consider contact boundary conditions.

\begin{lemma}
	If $(\bu,p)\in \bCc^2(\Omega)\times \Cc^1(\Omega)$, then the strong formulation \eqref{eq:main} with boundary conditions \eqref{eq:bc-gammas}-\eqref{eq:bc-gammad} holds if and only if the variational inequality \eqref{eq:vi-continuous} is satisfied.
\end{lemma}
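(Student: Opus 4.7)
The approach is classical for variational inequalities of contact type: obtain the forward direction (strong implies VI) by testing the PDE against $\bv - \bu$ and integrating by parts, and the backward direction (VI implies strong) by successively restricting the admissible test functions to isolate each boundary condition.

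\textbf{Forward direction.} Assume $(\bu,p)$ is classical and solves \eqref{eq:main}--\eqref{eq:bc-gammad}. For arbitrary $(\bv,q)\in K\times Q$, first multiply \eqref{eq:main-subeq1} by $\bv - \bu$ and integrate by parts using the symmetry of $\bD$. This yields
\[
\langle A\bu - Bp, \bv-\bu\rangle_V
= \int_\Omega \bff\cdot(\bv-\bu)\,\dd x + \int_{\partial\Omega} (\sigma\bn)\cdot(\bv-\bu)\,\dd s.
\]
Split the boundary integral according to the partition $\partial\Omega = \Gamma_s\cup\Gamma_b\cup\Gamma_d$: on $\Gamma_d$ both $\bu$ and $\bv$ vanish, on $\Gamma_s$ use \eqref{eq:bc-gammas} to recover the $p_s$ term in $F$, and on $\Gamma_b$ use $\bsigma_{nt}=0$ to reduce $(\sigma\bn)\cdot(\bv-\bu)$ to $\sigma_{nn}(\bv\cdot\bn - \bu\cdot\bn)$. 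The complementarity $(\bu\cdot\bn)\sigma_{nn}=0$ removes the $\bu$ contribution, while $\sigma_{nn}\le 0$ and $\bv\cdot\bn\le 0$ (since $\bv\in K$) give $\int_{\Gamma_b}\sigma_{nn}(\bv\cdot\bn)\,\dd s\ge 0$. Finally, multiplying \eqref{eq:main-subeq2} by $q$ shows $\langle Bq,\bu\rangle_V = 0$. Adding these contributions produces \eqref{eq:vi-continuous}.

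\textbf{Backward direction.} Assume \eqref{eq:vi-continuous} holds with $(\bu,p)\in\bCc^2\times\Cc^1$. First recover the incompressibility condition by taking $\bv=\bu$ and letting $q$ range over $Q$: since the resulting inequality must hold for $\pm q$, it yields $\nabla\cdot\bu = 0$ pointwise. Next, choose $\bv = \bu\pm\bphi$ with $\bphi\in\bCc_0^\infty(\Omega)$; these perturbations lie in $K$ and the VI becomes an equality, from which an integration by parts recovers the momentum equation \eqref{eq:main-subeq1} pointwise in $\Omega$. Substituting this back, the remaining content of the VI is a boundary inequality
\[
\int_{\partial\Omega}(\sigma\bn)\cdot(\bv-\bu)\,\dd s \ge 0 \qquad\forall\,\bv\in K.
\]
On $\Gamma_s$ (resp.\ tangential directions on $\Gamma_b$) one may take $\bv=\bu\pm\bphi$ with $\bphi$ supported near $\Gamma_s$ (resp.\ tangential to $\Gamma_b$), both of which remain in $K$; this yields $\sigma\bn = p_s\bn$ on $\Gamma_s$ and $\bsigma_{nt}=0$ on $\Gamma_b$.

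\textbf{The main obstacle} is extracting the pointwise contact conditions \eqref{eq:main-bc-contact} on $\Gamma_b$ from the reduced integral inequality
\[
\int_{\Gamma_b}\sigma_{nn}(\bv\cdot\bn - \bu\cdot\bn)\,\dd s \ge 0 \qquad\forall\,\bv\in K.
\]
The strategy is to first pick $\bv=0$ and $\bv=2\bu$ (both in $K$), giving the opposite inequalities that force $\int_{\Gamma_b}\sigma_{nn}(\bu\cdot\bn)\,\dd s=0$; hence $\int_{\Gamma_b}\sigma_{nn}(\bv\cdot\bn)\,\dd s\ge 0$ for all $\bv\in K$. Then, for any smooth $\phi\le 0$ on $\Gamma_b$ one constructs a $\bv\in K$ whose normal trace equals $\phi$ (possible because $\Gamma_b$ is separated from $\Gamma_d$ in the polygonal setting), so $\int_{\Gamma_b}\sigma_{nn}\phi\,\dd s\ge 0$ for all such $\phi$, yielding $\sigma_{nn}\le 0$ on $\Gamma_b$. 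Combined with $\bu\cdot\bn\le 0$ (from $\bu\in K$), the nonpositive integrand $\sigma_{nn}(\bu\cdot\bn)\ge 0$ whose integral vanishes must vanish pointwise, giving $(\bu\cdot\bn)\sigma_{nn}=0$ and completing \eqref{eq:main-bc-contact}.
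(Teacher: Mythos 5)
Your proof is correct and follows essentially the same route as the paper: the forward direction is the identical integration-by-parts argument with the boundary integral split over $\Gamma_s$, $\Gamma_b$, $\Gamma_d$ and the sign of the $\Gamma_b$ term obtained from the contact conditions. For the converse the paper merely cites the fundamental lemma of the calculus of variations with suitable test functions and refers to the literature; your localisation argument (interior perturbations, then boundary perturbations, then $\bv=0$ and $\bv=2\bu$ to extract the complementarity condition) is precisely the standard argument being alluded to and fills in those details correctly.
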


\begin{proof}
	Let $(\bu,p)\in \bCc^2(\Omega)\times \Cc^1(\Omega)$ satisfy \eqref{eq:main} and \eqref{eq:bc-gammas}-\eqref{eq:bc-gammad}. It is clear that if \eqref{eq:main-subeq2} holds, then $\langle Bq,\bu\rangle_V = 0$ for all $q\in Q$. Let $\bv\in K$ and multiply \eqref{eq:main-subeq1} by $\bv-\bu$ and integrate over $\Omega$. The equality 
	\begin{equation}\label{eq:int-by-parts}
		\begin{split}
			- \int_\Omega \left[ \nabla \cdot \left( \alpha |\bD\bu|^{r-2} \bD\bu \right) - \nabla p \right] \cdot (\bv - \bu)\,\dd x = \\
			\langle A\bu - Bp, \bv - \bu \rangle_V - \int_{\partial\Omega} \sigma (\bv - \bu) \cdot \bn\,\dd s
		\end{split}
	\end{equation}
	follows from the divergence theorem. We also have that 
	\[
		\int_{\partial\Omega} \sigma (\bv - \bu) \cdot \bn\,\dd s = \int_{\partial\Omega} \left( \sigma_{nn} (\bv - \bu) \cdot \bn + \bsigma_{nt} \cdot (\bv - \bu )\right)\,\dd s.
	\]
	As a result of the contact conditions \eqref{eq:main-bc-contact}, we have 
	\begin{align*}
		\int_{\Gamma_b} \sigma_{nn} (\bv - \bu)\cdot\bn\,\dd s \geq - \int_{\Gamma_b} p_w \left( \bv - \bu \right) \cdot \bn\,\dd s,
	\end{align*}
	from which the variational inequality \eqref{eq:vi-continuous} follows.
	
	The converse statement is deduced by means of the integration by parts formula \eqref{eq:int-by-parts} and the use of the fundamental lemma of calculus of variations with adequate test functions. The examples in \cite{glowinski1981,haslinger1996} contain similar derivations.
\end{proof}

For the next step, we need to use the inf-sup condition \eqref{eq:inf-sup-VbQ} between the velocity and the pressure spaces. 

\begin{lemma}\label{lemma:equivalence-vi-min}
	Given a solution $(\bu,p)\in K\times Q$ of the variational inequality \eqref{eq:vi-continuous}, the velocity field is then divergence free, i.e.~$\bu\in \Ko$, and is a minimiser of the functional $\Jc:\Ko\to \RR$ defined in \eqref{eq:Jc-functional}. Conversely, if $\bu\in \Ko$ minimises $\Jc:\Ko\to \RR$, then there is a unique $p\in Q$ such that $(\bu,p)\in K\times Q$ solves \eqref{eq:vi-continuous}.
\end{lemma}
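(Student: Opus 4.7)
The plan is to treat the two implications separately. The forward direction is a short manipulation of the variational inequality; the converse is the substantive step and requires constructing the pressure $p$ as a Lagrange multiplier for the incompressibility constraint via the inf-sup condition \eqref{eq:inf-sup-VbQ}.

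For the forward direction, given $(\bu,p)$ solving \eqref{eq:vi-continuous}, testing with $\bv = \bu$ and using the linearity of $Q$ (replacing $q$ by $\pm q$) forces $\langle Bq,\bu\rangle_V = 0$ for all $q \in Q$, i.e.~$\bu \in \Ko$. Fixing $q = 0$ and restricting to $\bv \in \Ko$ cancels the pressure term, leaving $\langle A\bu - F, \bv - \bu\rangle_V \geq 0$ on $\Ko$, which is the first-order optimality condition for $\bu$ to minimise the convex, Gâteaux-differentiable $\Jc$ (with $\Jc'(\bu) = A\bu - F$) over $\Ko$.

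For the converse, let $\bu \in \Ko$ minimise $\Jc$, so that convexity yields $\langle A\bu - F, \bv - \bu\rangle_V \geq 0$ on $\Ko$. By \eqref{eq:inf-sup-VbQ} and the closed range theorem, $\nabla\cdot : V_b \to Q'$ is surjective and admits a bounded section: for each $g \in Q'$ one may pick $\boldsymbol{z}_g \in V_b$ with $\nabla\cdot\boldsymbol{z}_g = g$ and $\nrm{\boldsymbol{z}_g}_V \leq C\nrm{g}_{Q'}$. Perturbing by $\bw \in V_b \cap \Vo$ (so $\bu \pm \bw \in \Ko$) shows that $A\bu - F$ annihilates $V_b \cap \Vo = \Ker(\nabla\cdot|_{V_b})$, whence the functional $\phi(g) := \langle A\bu - F, \boldsymbol{z}_g\rangle_V$ is well defined (independent of the lift), linear, and bounded on $Q'$. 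Since $Q = L^{r'}(\Omega)$ is reflexive, there exists $p \in Q$ with $\phi(g) = \int_\Omega p\,g\,\dd x$, i.e.~$\langle A\bu - F, \boldsymbol{z}\rangle_V = \langle Bp, \boldsymbol{z}\rangle_V$ for all $\boldsymbol{z} \in V_b$. For arbitrary $\bv \in K$, I would then set $g = \nabla\cdot(\bv - \bu)$ and $\tilde\bv := \bv - \boldsymbol{z}_g$; then $\nabla\cdot\tilde\bv = 0$ and $\gamma_n\tilde\bv = \gamma_n\bv \leq 0$ (the normal trace is undisturbed because $\boldsymbol{z}_g \in V_b$), so $\tilde\bv \in \Ko$, and applying the VI to $\tilde\bv$ produces $\langle A\bu - F, \bv - \bu\rangle_V \geq \langle Bp, \bv - \bu\rangle_V$. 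Combined with $\langle Bq,\bu\rangle_V = 0$ (automatic from $\bu \in \Vo$), this is exactly \eqref{eq:vi-continuous}.

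Uniqueness of $p$ follows by testing \eqref{eq:vi-continuous} with $\bv = \bu \pm \bw$ for $\bw \in V_b$ (both lie in $K$ because $\gamma_n$ is unchanged), which forces $\langle B(p_1 - p_2), \bw\rangle_V = 0$ on $V_b$, and \eqref{eq:inf-sup-VbQ} then gives $p_1 = p_2$. I expect the main obstacle to be the correction step in the converse: the lift $\boldsymbol{z}_g$ must simultaneously restore incompressibility and preserve the contact constraint $\gamma_n\bv \leq 0$, which works precisely because \eqref{eq:inf-sup-VbQ} is formulated over $V_b$ rather than over all of $V$, so the section has vanishing normal trace and does not drag $\tilde\bv$ out of $K$.
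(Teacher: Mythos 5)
Your proof is correct and follows essentially the same route as the paper: restrict the variational inequality to $\Ko$ and use convexity of $\Jc$ for the forward direction, then for the converse decompose an arbitrary $\bv\in K$ into a divergence-free part in $\Ko$ plus a correction in $V_b$ (your $\tilde\bv + \boldsymbol{z}_g$ is exactly the paper's $\bv_0+\bw$ splitting, which it cites from Amrouche--Girault) and recover $p$ from the inf-sup condition \eqref{eq:inf-sup-VbQ}. The only difference is that you spell out how the inf-sup condition produces $p$ (bounded right inverse of the divergence, annihilation of the kernel, representation by reflexivity of $Q$), where the paper simply invokes \eqref{eq:inf-sup-VbQ}.
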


\begin{proof}
	For the first part of the Lemma, for a test function $\bv\in \Ko$, the variational inequality \eqref{eq:vi-continuous} can be written as 
	\begin{align}\label{eq:vi-continuous-divfree}
		\langle A\bu + G\bu - F, \bv - \bu \rangle_V \geq 0 \quad \forall \bv\in \Ko.
	\end{align}
	By the convexity of $\Jc$ and the fact that $\langle D\Jc(\bu), \bv \rangle_V = \langle A\bu + G\bu - F,\bv\rangle_V$, it follows from \eqref{eq:vi-continuous-divfree} that $\Jc(\bu) \leq \Jc(\bv)$ for all $\bv\in \Ko$.
	
	Conversely, if we assume $\bu\in\Ko$ to minimise $\Jc$ over $\Ko$, then $\bu$ solves \eqref{eq:vi-continuous-divfree}. Now, using \cite[Lemma 3.3]{amrouche1994}, we can decompose $\bv\in K$ into the sum $\bv = \bv_0 + \bw$ of a divergence-free velocity field $\bv_0\in \Ko$ and the field $\bw\in V_b$. Then, the variational inequality \eqref{eq:vi-continuous} will hold if there is a $p\in Q$ such that
	\begin{align}\label{eq:ve-Vb}
		\langle A\bu - F, \bw \rangle_V = \langle Bp,\bw \rangle_V \quad \forall \bw\in V_b.
	\end{align}
	By \eqref{eq:inf-sup-VbQ}, there is a $p\in Q$ for which \eqref{eq:ve-Vb} holds and it is unique.
\end{proof}

Finally, we show that the variational inequality \eqref{eq:vi-continuous} is equivalent to the mixed problem \eqref{eq:mixed-continuous}. This proof relies on the fact that the range of the operator $\gamma_n : V \to \Sigma$ is closed.

\begin{lemma}\label{lemma:equivalence-mixed-vi}
	If $(\bu,p)\in K\times Q$ solves the variational inequality \eqref{eq:vi-continuous}, then there is a unique $\lambda\in\Lambda$ such that $(\bu,p,\lambda)$ is a solution of the mixed problem \eqref{eq:mixed-continuous}. Conversely, if $(\bu,p,\lambda)\in V\times Q\times \Lambda$ solves \eqref{eq:mixed-continuous}, then $(\bu,p)$ is a solution of \eqref{eq:vi-continuous}.
\end{lemma}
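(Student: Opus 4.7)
The plan is to prove each implication separately and use the closed range property of $\gamma_n$ (which underlies the inf-sup condition \eqref{eq:inf-sup-VS}) to construct the Lagrange multiplier in the harder forward direction.

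For the forward direction, I start from $(\bu,p) \in K\times Q$ solving \eqref{eq:vi-continuous}. Testing with $(\bu, \pm q)$ and using that $Q$ is a linear space immediately gives $\langle Bq,\bu\rangle_V = 0$ for all $q \in Q$. Next, I test with $\bv = \bu \pm \bw$ for any $\bw \in V_b$; since $\gamma_n\bw = 0$ these are both admissible, so $\langle A\bu - Bp - F, \bw\rangle_V = 0$ for all $\bw \in V_b = \Ker(\gamma_n)$. This is the key observation: the functional $\bv \mapsto \langle A\bu - Bp - F, \bv\rangle_V$ vanishes on the kernel of $\gamma_n$ and therefore factors through $\Sigma = \Ran(\gamma_n)$. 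The main subtlety lies here: I must define $\lambda \in \Sigma'$ by $\langle \lambda, \zeta\rangle_\Sigma := \langle A\bu - Bp - F, \bv\rangle_V$ for any preimage $\bv$ of $\zeta$, and the continuity of $\lambda$ as an element of $\Sigma'$ requires the inverse mapping theorem applied to the quotient $\gamma_n : V/V_b \to \Sigma$, which is precisely where the assumption that $\gamma_n$ has closed range (equivalently \eqref{eq:inf-sup-VS}) is used.

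Once $\lambda$ is in hand, the first equation of \eqref{eq:mixed-continuous} holds by construction. To show $\lambda \in \Lambda$, I pick any $\zeta \in \Sigma$ with $\zeta \leq 0$, choose $\bv \in V$ with $\gamma_n \bv = \zeta$, and observe that $\bu + \bv \in K$; testing the VI with $\bu + \bv$ yields $\langle \lambda, \zeta\rangle_\Sigma \geq 0$. Testing the VI with $\bv = 0$ and with $\bv = 2\bu$ (both in $K$ because $K$ is a cone containing $0$) gives $\langle \lambda, \gamma_n\bu\rangle_\Sigma = 0$. Combined with $\langle \mu, \gamma_n\bu\rangle_\Sigma \geq 0$ for any $\mu \in \Lambda$ (since $\gamma_n\bu \leq 0$), this yields \eqref{eq:mixed-continuous-S}. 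Uniqueness of $\lambda$ is immediate because $\gamma_n$ is surjective onto $\Sigma$.

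For the reverse direction, let $(\bu,p,\lambda) \in V \times Q \times \Lambda$ solve \eqref{eq:mixed-continuous}. The second equation gives $\bu \in \Vo$. From \eqref{eq:mixed-continuous-S}, using that $\Lambda$ is a convex cone, the standard cone argument (test with $\mu = 0$ and $\mu = 2\lambda$) gives $\langle \lambda, \gamma_n\bu\rangle_\Sigma = 0$ and $\langle \mu, \gamma_n\bu\rangle_\Sigma \geq 0$ for all $\mu \in \Lambda$; the latter together with the bipolar characterisation of $\Lambda$ as the polar of $\{\zeta \in \Sigma : \zeta \leq 0\}$ forces $\gamma_n\bu \leq 0$, so $\bu \in K$. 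Finally, for any $(\bv,q) \in K \times Q$, equation \eqref{eq:mixed-continuous-V} and $\bu \in \Vo$ give
\begin{equation*}
\langle A\bu - Bp - F, \bv - \bu\rangle_V - \langle Bq,\bu\rangle_V = \langle \lambda, \gamma_n\bv\rangle_\Sigma - \langle \lambda, \gamma_n\bu\rangle_\Sigma = \langle \lambda, \gamma_n\bv\rangle_\Sigma \geq 0,
\end{equation*}
the last inequality because $\lambda \in \Lambda$ and $\gamma_n\bv \leq 0$. This recovers \eqref{eq:vi-continuous}. The principal obstacle is the clean construction of $\lambda$ and its continuity; everything else is algebraic manipulation using that $K$ is a convex cone and $\Lambda$ is its polar in an appropriate sense.
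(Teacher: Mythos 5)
Your proof is correct and follows essentially the same route as the paper: you construct $\lambda$ by showing that $A\bu - Bp - F$ annihilates $\Ker(\gamma_n)=V_b$ and invoking the closed range of $\gamma_n$ (the paper phrases this as $\Ran(\gamma_n') = (\Ker \gamma_n)^\circ$, you as factoring through the quotient via the inverse mapping theorem — the same fact), and you verify the sign and complementarity conditions with the same test functions ($\bv=0$, $\bv=2\bu$, and $\bu+\bv$ for admissible $\bv$). Your converse is, if anything, slightly more careful than the paper's in explicitly recovering $\gamma_n\bu\leq 0$ from the bipolar characterisation of $\Lambda$, a step the paper leaves implicit.
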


\begin{proof}
	Equation \eqref{eq:mixed-continuous-V} can be rewritten as 
	\begin{align}\label{eq:mixed-continuous-rewritten}
		\gamma_n'\lambda = A\bu + G\bu - Bp - F \quad \text{in $V'$}.
	\end{align}
	Since $\gamma_n: V \to \Sigma$ has a closed range, we have that $\Ran{\gamma_n'} = \left( \Ker{\gamma_n}\right)^\circ$, where
	\[
		\left( \Ker{\gamma_n}\right)^\circ = \left\lbrace \mu\in\Sigma' : \langle \mu, \phi \rangle_\Sigma = 0 \quad \forall \phi\in\Ker{\gamma_n}\right\rbrace.
	\]
	Therefore, if $(\bu,p)\in K\times Q$ is a solution to \eqref{eq:vi-continuous}, then there is unique $\lambda\in \Sigma'$ if $A\bu + G\bu - Bp - F\in \left( \Ker{\gamma_n}\right)^\circ$. For a $\bw\in\Ker{\gamma_n}$, we clearly have that $\bu + \bw \in K$. Using the variational inequality \eqref{eq:vi-continuous}, we can write
	\[
		\langle A\bu + G\bu - Bp - F, \bw \rangle_V = 0,
	\]
	which means that $A\bu + G\bu - Bp - F\in \left( \Ker{\gamma_n}\right)^\circ$. Next, we must show that $\lambda\in \Lambda$ and that \eqref{eq:mixed-continuous-S} holds. By setting $\bv = 0$ and $\bv = 2\bu$ in \eqref{eq:vi-continuous} we see that $\langle \lambda, \gamma_n\bu\rangle_\Sigma = 0$. Since $\gamma_n\bu \leq 0$ in $\Sigma$, it follows that \eqref{eq:mixed-continuous-S} must hold. Finally, $\lambda\in\Lambda$ follows from \eqref{eq:vi-continuous}, \eqref{eq:mixed-continuous-rewritten} and the fact that $\bv + \bu \in K$ for any $\bv \in K$.
	
	For the second part of the lemma, if $(\bu,p,\lambda)\in V\times Q\times \Lambda$ solves \eqref{eq:mixed-continuous}, then $\nabla\cdot \bu = 0$ a.e. in $\Omega$, $\langle\lambda, \gamma_n\bv\rangle_\Sigma \geq 0$ for all $\bv \in K$ and $\langle\lambda, \gamma_n\bu\rangle_\Sigma = 0$. This implies that $\langle\mu, \gamma_n\bu\rangle_\Sigma \geq 0$ for all $\mu\in \Lambda$, hence $\bu\in \Ko$. The variational inequality then follows directly from \eqref{eq:mixed-continuous} by testing with $(\bv - \bu, q)$, where $(\bv,q)\in K\times Q$.	
\end{proof}

\section{Technical results on finite element spaces}

\subsection{Approximation properties in negative order Sobolev spaces} \label{app-subsec:interpolation}

Several technicalities arise from the need to handle the dual space of the fractional Sobolev space $W^{1-1/r,r}(\Gamma_b)$ and its finite element approximation $\Sigma_h$. Let $s\in(0,1)$ and $m\in [1,\infty)$; on $\Gamma_b$, the norm of the fractional Sobolev space $W^{s,m}(\Gamma_b)$ with $s\in(0,1)$ and $m\in [1,\infty]$ can be defined by
\begin{align}\label{eq:fractional_norm}
	\nrm{\phi}^m_{W^{s,m}(\Gamma_b)} = \nrm{\phi}^m_{L^m(\Gamma_b)} + [\phi]^m_{W^{s,m}(\Gamma_b)},
\end{align}
where
\[
	[\phi]^m_{W^{s,m}(\Gamma_b)} = \int_{\Gamma_b}\int_{\Gamma_b} \frac{|\phi(x) - \phi(y)|^m}{|x-y|^{1+sm}}\,\dd x\dd y,
\]
see \cite{nezza2012}. In order to prove certain approximation properties on $\Sigma_h$ we need to introduce some theoretical results. We start by defining the following pair of spaces
\[
	L_0^m(e) = \left\lbrace \phi\in L^m(e) : \int_{e} \phi\,\dd x = 0 \right\rbrace,\quad W^{s,m}_0(e) = W^{s,m}(e)\cap L^r_0(e)
\]
for an edge $e\in\Ec(\Gamma_b)$. We can use the fractional normed Poincar\'e inequality proved in \cite[Lemma 7.1]{ern2017} to show that
\begin{align}\label{eq:poincare-ineq}
	\nrm{\phi}_{L^m(e)} \leq |e|^s [\phi]_{W^{s,m}(e)} \quad \forall \phi\in W_0^{s,m}(e).
\end{align}
Inequality \eqref{eq:poincare-ineq} can be extended to negative norms by writing the $L^m(e)$ norm for any $\phi\in L^m(\Gamma_b)$ as
\[
	\nrm{\phi}_{L^m(e)} = \sup_{\psi\in L^{m'}(e)}{\frac{\int_{e}\phi\psi\,\dd s}{\nrm{\psi}_{L^{m'}(e)}}}
\]
and deducing that, if $\phi\in L^m_0(e)$, we have 
\begin{equation}\label{eq:negative-norm}
	\begin{split}
		\nrm{\phi}_{\left( W^{s,m}(e) \right)'} &= \sup_{\psi\in W_0^{s,m}(e)}{\frac{\int_{e}\phi\psi\,\dd s}{\nrm{\psi}_{W^{s,m}(e)}}} \\
		&\leq |e|^s \sup_{\psi\in L^{m'}(e)}{\frac{\int_{e}\phi\psi\,\dd s}{\nrm{\psi}_{L^{m'}(e)}}} = |e|^s  \nrm{\phi}_{L^{m'}(e)}.
	\end{split}
\end{equation}

For the finite element space $\Sigma_h$ defined in \eqref{eq:Sh}, let $\pi_\Sigma:L^m(\Gamma_b) \to \Sigma_h$ be the standard interpolation operator onto piecewise constant polynomials which takes the average of functions over each $e\in \Ec(\Gamma_b)$. Then, from inequality \eqref{eq:poincare-ineq} we can prove error estimates in fractional norms. Moreover,  \eqref{eq:negative-norm} leads to
\begin{align}
	\nrm{\phi - \pi_\Sigma\phi}_{\left( W^{s,m}(\Gamma_b) \right)'} \lesssim h^{2s} \nrm{\phi}_{W^{s,m'}(\Gamma_b)},
\end{align}
where $h = \max{ \{|e| : e \in \Ec(\Gamma_b)\}}$.

\subsection{An interpolation operator for the velocity}\label{app-subsec:interpolation-V}

Here, we compile a variety of results from different sources and prove a result regarding an interpolation operator for the velocity that preserves the discrete divergence and maps elements of $K$ into $K_h$. We denote by $\pi_V$ the interpolation operator introduced in \cite[Section 3.1]{girault2003} that is defined as follows for each component of a vector-valued function: for a non-degenerate simplex $K\in\Tc_h$ with edges $\{e_i\}_{i = 1}^3$ and vertices $\{\ba_i\}_{i=1}^3$, we define the nodal basis functions $\phi_{x}$ with $x\in \{e_i\}_{i = 1}^3 \cup \{\ba_i\}_{i=1}^3$ by
\begin{align*}
	\phi_{\ba_i}(\ba_j) &= \delta_{ij}, \quad \int_{e_j} \phi_{\ba_i}\,\dd s = 0,\quad \int_{e_j} \phi_{e_i}\,\dd s = \delta_{ij}, \quad \phi_{e_i}(\ba_j) = 0,
\end{align*}
for all $i,j\in\{1,2,3\}$. For each vertex $\ba_i$ we choose a an edge $e_{\ba_i}\in \{e_i\}_{i = 1}^3$ such that $\ba_i\in \overline{e_{\ba_i}}$. We then define the dual basis functions $\{\psi_{\ba_i}\}_{i=1}^3$ by
\begin{align*}
	\int_{e_{\ba_i}}\psi_{\ba_i} \phi_{x}\,\dd s = \delta_{\ba_ix}, \quad \psi_{\ba_i}\in\bbP_2(e_{\ba_i}),
\end{align*}
where $x$ denotes the edge $e_{\ba_i}$ or its two end-points. We then define (the scalar version of) $\pi_V$ by 
\begin{align*}
	(\pi_V u)(\bx) = \sum_{i = 1}^3 \left( \left[ \int_{e_{\ba_i}} u \psi_{\ba_i}\,\dd s\right] \phi_{\ba_i}(\bx) + \left[ \int_{e_{i}} u \,\dd s\right] \phi_{e_i}(\bx) \right).
\end{align*}

When considering the definition of $\pi_V$ in terms of a triangulation $\Tc_h$ of $\Omega$, for vertices $\ba \in \partial \Omega$, we set the associated edge $e_{\ba}$ to also be contained in $\partial \Omega$. Then, we have that $\pi_V(V)\subset V_h$ and we can prove \eqref{eq:F-tang-approx}.

If the spaces $V_h$, $Q_h$ and $\Sigma_h$ are defined as in \eqref{eq:fem-spaces}, a straightforward consequence of the definition of $\pi_V$ is that
\begin{align}
	\langle Bq_h, \bv \rangle_V &= \langle Bq_h, \pi_V \bv \rangle_V && \forall (\bv,q_h)\in V\times Q_h,\label{eq:div-preserving}\\
	\langle \mu_h, \gamma_n\bv \rangle_\Sigma &= \langle \mu_h, \gamma_n \pi_V \bv \rangle_\Sigma && \forall (\bv,\mu_h)\in V\times \Sigma_h. \label{eq:K-preserving}
\end{align}
Additionally, the interpolation operator $\pi_V$ has two key approximation properties. First, the optimal approximation property
\begin{align}\label{eq:optimal-approx}
	\nrm{\bv - \pi_V\bv}_{\bW^{s,m}(\Omega)} \lesssim h^k \nrm{\bv}_{\bW^{s+k,m}(\Omega)}
\end{align}
holds for all $m\geq 0$ and $s,k\in\NN$ such that $0 \leq s \leq 3$ and $0\leq k \leq 3-s$. Finally, given the operator $\bF$ defined in \eqref{eq:def-F}, the additional approximation property holds:
\begin{align}\label{eq:F-approx}
	\nrm{\bF(\bD\bv) - \bF(\bD\pi_V\bv)}_{L^2(\Omega)} \lesssim h \nrm{\nabla\bF(\bD\bv)}_{L^2(\Omega)}.
\end{align}
Property \eqref{eq:optimal-approx} is shown to hold in \cite{girault2003}. On the other hand, \eqref{eq:F-approx} follows from \cite[Theorem 3.4]{belenki2012} by applying Poincar\'e's inequality once points (a) and (b) from Assumption 2.9 in that reference are shown to hold. These two points result from \eqref{eq:div-preserving} and \eqref{eq:optimal-approx}.

Finally, we may also prove that 
\begin{align}\label{eq:F-tang-approx}
	\nrm{\bF(\bT\bv) - \bF(\bT\pi_V\bv)}_{L^2(\Gamma_b)} \lesssim h \sum_{\substack{e\in\Ec(\partial\Omega)\\ \overline{e}\cap\overline{\Gamma}_b\neq\emptyset}} \nrm{\nabla\bF(\bT\bv)}_{L^2(e)}
\end{align}
by imitating the proof for \cite[Theorem 3.4]{belenki2012} and applying Poincar\'e's inequality. Most of the steps in this proof draw from algebraic relations for the function $\bF$ and the N-functions considered therein that continue to be valid in our context. Additionally, we need the following Orlicz-continuity result analogous to that of \cite[Theorem 3.2]{belenki2012}: for an N-function $\psi$ with $\Delta_2(\psi) < \infty$ and an edge $e\in\Ec(\Gamma_b)$, 
\begin{align*}
	\int_e \psi(|\bT\pi_V\bv|)\,\dd s \lesssim \sum_{\substack{e'\in\Ec(\partial\Omega)\\ \overline{e}\cap\overline{e'}\neq\emptyset}} \int_{e'} \psi(|\bT\bv|)\,\dd s.
\end{align*}
We may show the above inequality to hold by following the proof of \cite[Theorem 4.5]{diening2007} and using the local $L^1$-estimate for $e\in\Ec(\Gamma_b)$:
\begin{align*}
	\int_e |\bT\pi_V\bv|\,\dd s \lesssim \sum_{\substack{e'\in\Ec(\partial\Omega)\\ \overline{e}\cap\overline{e'}\neq\emptyset}} \int_{e'} |\bT\bv|\,\dd s.
\end{align*}
To prove this inequality, we turn to the definition of $\pi_V$ and use the bounds 
\begin{align*}
	\nrm{\psi_{\ba}}_{L^\infty(e_{\ba})}\lesssim |e|^{-1}, \quad \nrm{\phi_{\ba}}_{L^1(e_{\ba})}\lesssim |e| \quad \text{and} \quad \nrm{\phi_{e}}_{L^1(e)}\lesssim 1.
\end{align*}

\subsection{An extension operator}\label{app-subsec:extension}

In this section we prove an auxiliary result required for the proof of Lemma \ref{lemma:inf-sup-VSh}. We build an extension operator $\Phi:\Sigma\to V_h$ which is uniformly bounded and satisfies
\begin{align}\label{eq:appC2-property}
	\langle \mu_h, \gamma_n(\Phi\phi) \rangle_\Sigma = \langle \mu_h, \phi \rangle_\Sigma \quad \forall \mu_h \in \Sigma_h.
\end{align}

\textit{Step 1}. We will first find a uniformly bounded linear operator $\Pi:\Sigma\to \gamma_n(V_h)$ with the property that 
\begin{align}\label{eq:appC2-step1}
	\int_e (\phi - \Pi\phi)\,\dd s = 0 \quad \text{for any $e\in\Ec(\Gamma_b)$ and $\phi\in \Sigma$.}
\end{align}
Let 
\[
	Z_h = \left\lbrace \phi_h\in \Cc(\Gamma_b) : \phi_h|_e \in \Pc_2(e) \quad \forall e\in\Ec(\Gamma_b) \right\rbrace
\]
and note that $Z_h \subset \gamma_n(V_h)$. For $\phi\in\Sigma$ and $e\in\Ec(\Gamma_b)$, we define $\Pi_2:\Sigma\to Z_h$ by setting  
\begin{align*}
	(\Pi_2\phi)(a) &= 0  \quad \text{for the endpoints $a$ in $e$},\\
	\int_e \Pi_2\phi\,\dd s &= \int_e\phi\,\dd s.
\end{align*}
We clearly have that $\Pi_2\phi = 0 $ if and only if $\sum_{e\in\Ec(\Gamma_b} \int_e |\phi|\,\dd s = 0$, so the latter defines a norm on $\Pi_2(\Sigma)$. By exploiting this fact and the norm equivalence on finite dimensional spaces, one can see that 
\[
	\nrm{\Pi_2\phi}_{W^{1-1/r,r}(e)}\lesssim |e|^{-1/r'} \nrm{\phi}_{L^r(e)} \quad \forall e\in\Ec(\Gamma_b)
\]
for all $\phi\in\Sigma$. Now, let $\pi_{Z}:\Sigma\to Z_h$ be the quasi-interpolation operator defined in \cite{ern2017}. This operator is uniformly bounded in the $W^{1-1/r,r}(\Gamma_b)$ norm and satisfies 
\[
	\nrm{\phi - \pi_Z\phi}_{L^r(e)}\lesssim |e|^{1-1/r} \nrm{\phi}_{W^{1-1/r,r}(e)}
\]
for any edge $e\in \Ec(\Gamma_b)$ and function $\phi\in \Sigma$. As a result, the operator $\Pi = \pi_Z + \Pi_2(I-\pi_Z)$ is uniformly bounded and possesses the required property \eqref{eq:appC2-step1}.

\textit{Step 2.} For the final step, we define a uniformly bounded operator $\gamma_{n,h}^{-1}:\gamma_n(V_h) \to V_h$ for which $\gamma_{n,h}^{-1}\phi_h\cdot\bn = \phi_h$ on $\Gamma_b$. This operator can be defined as the solution of the problem:
\begin{align*}
	\int_\Omega \nabla (\gamma_{n,h}^{-1}\phi_h) : \nabla\bv_h\,\dd x &= 0 && \forall \bv_h\in V_h,\\
	\gamma_{n,h}^{-1}\phi_h\cdot\bn &= \phi_h && \text{on $\Gamma_b$}, \\
	\gamma_{n,h}^{-1}\phi_h\cdot\bn &= 0 && \text{on $\Gamma_d$}.
\end{align*}
Then, the operator $\Phi = \gamma_{n,h}^{-1} \circ \Pi$ is uniformly bounded and property \eqref{eq:appC2-property} holds.

\begin{remark}
	The construction of the uniformly bounded operator $\Pi:\Sigma\to \gamma_n(V_h)$ in step 1 above closely resembles that of the Fortin operator in \cite[Proposition 8.4.3]{boffi2013}. In fact, the operator $\Phi \circ \gamma_n: V \to V_h$ effectively acts as a Fortin operator in the proof of Lemma \ref{lemma:inf-sup-VSh}.
\end{remark}

\bibliographystyle{amsplain}

\bibliography{bibliography}

\end{document}